\documentclass[11pt]{article}  
\usepackage{amsmath}
\usepackage{amssymb}
\usepackage{theorem}
\usepackage{euscript}
\usepackage{pstricks}
\usepackage{authblk}
\usepackage{verbatim}
\usepackage{graphics}
\usepackage{graphicx}
\usepackage{tikz}
\usepackage{amscd}
\usepackage{color}

\usepackage{hyperref}
\hypersetup
{
	colorlinks,
	citecolor=blue,
	filecolor=green,
	linkcolor=green,
	urlcolor=blue
}
\hypersetup{linktocpage}

\newtheorem{theorem}{Theorem}[section]
\newtheorem{lemma}[theorem]{Lemma}

\numberwithin{equation}{section}
\newtheorem{prop}[theorem]{Proposition}
\theoremstyle{definition}
\newtheorem{definition}[theorem]{Definition} 
\theoremstyle{remark}

\newcommand{\brac}[1]{\left(#1\right)}

\newcommand{\bk}{{\boldsymbol{k}}}
\newcommand{\bell}{{\boldsymbol{\ell}}}

\newcommand{\bx}{{\boldsymbol{x}}}

\newcommand{\bX}{{\boldsymbol{X}}}
\newcommand{\by}{{\boldsymbol{y}}}

\newcommand{\balpha}{{\boldsymbol{\alpha}}}

\newcommand{\rd}{{\rm d}}

\newcommand{\mix}{{\rm m}} 
\newcommand{\rank}{{\rm rank}} 
\def\ZZd{{\mathbb Z}^d}
\def\IId{{\mathbb I}^d}

\def\RR{{\mathbb R}}
\def\RRd{{\mathbb R}^d}

\def\NN{{\mathbb N}}
\def\NNd{{\NN}^d}

\def\NN{{\mathbb N}}
\def\RR{{\mathbb R}}

\def\UU{{\mathbb U}}
\def\UUd{{\mathbb U}^d}

\def\IId{{\mathbb I}^d}

\def\NNd{{\mathbb N}^d}
\def\RRd{{\mathbb R}^d}

\def\ZZd{{\mathbb Z}^d}

\def\NN{{\mathbb N}}
\def\RR{{\mathbb R}}

\def\IId{{\mathbb I}^d}

\def\NNd{{\mathbb N}^d}
\def\RRd{{\mathbb R}^d}

\def\supp{\operatorname{supp}}

\def\ext{\operatorname{ext}}
\def\Wpgamma{W^s_p(\RRd,\gamma)}
\def\Wap{W^s_p}

\def\Lqgamma{L_q(\RRd,\gamma)}

\newcommand{\norm}[2]{\left\|{#1}\right\|_{#2}}

\title{\sffamily {Widths of embeddings of Gaussian Sobolev spaces}}

\author{Van Kien Nguyen}
\affil{Department of Mathematical Analysis, University of Transport and Communications
	\protect\\	No.3 Cau Giay Street, Lang  Ward,
	Hanoi, Vietnam
	\protect\\
	Email: kiennv@utc.edu.vn}

\date{\today}
\begin{document}
\maketitle

\begin{abstract}
In this paper, we investigate the approximation problem for functions in Gaussian Sobolev spaces $W^s_p(\mathbb{R}^d, \gamma)$ of smoothness $s > 0$, where the approximation error is measured in the Gaussian Lebesgue space $L_q(\mathbb{R}^d, \gamma)$. Such function spaces naturally arise in the analysis of high-dimensional problems with Gaussian measures and play an important role in various applications, including uncertainty quantification and stochastic modeling.
Our main objective is to analyze the asymptotic behavior of fundamental quantities that characterize the complexity of the approximation problem. In particular, we determine the exact asymptotic order of several classes of widths, including Kolmogorov, linear, and sampling widths, which quantify the optimal performance of different approximation methods. The obtained results cover the parameter regimes $1 \leq q < p < \infty$ and $p = q = 2$, where distinct phenomena in terms of approximation rates can be observed.
	
\medskip
	\noindent
	{\bf Keywords and Phrases}:  Gaussian Sobolev space; Asymptotic order of convergence; Linear width; Kolmogorov width; sampling width
	
	\medskip
	\noindent
	{\bf MSC (2020)}:   65D30; 65D32; 41A25; 41A46.
	
\end{abstract}

\section{Introduction}
 \label{Introduction}

In recent years, there has been a rapidly growing body of research devoted to multivariate approximation in the framework of Gaussian Sobolev spaces \cite{DN23,DILP18,Nguyen2025,DDung2025,DDung2026}. This line of research is strongly motivated by the observation that many problems arising in fields such as mathematical finance, statistical physics, quantum chemistry, and machine learning are inherently high-dimensional and naturally formulated on domains equipped with Gaussian measures. In such settings, the functions of interest typically exhibit a certain degree of regularity that can be described in terms of Sobolev smoothness with respect to the Gaussian measure. Consequently, the study of approximation problems in Gaussian Sobolev spaces is both natural and of fundamental importance.

A central question in this context is to understand the intrinsic difficulty of multivariate approximation problems and to characterize the optimal performance of numerical methods. In a recent paper \cite{DN23}, Dinh Dũng and the present author investigated linear approximation and sampling recovery for functions in the Gaussian Sobolev space $W^s_{p,\mix}(\RRd,\gamma)$ of mixed smoothness $s \in \NN$, with the error measured in $L_q(\RR^d,\gamma)$. The asymptotic optimality of these methods was established in terms of Kolmogorov, linear, and sampling widths, which serve as fundamental measures of complexity in approximation theory. More precisely, the exact asymptotic orders were obtained for the regimes $1 \leq q < p < \infty$ and $p = q = 2$, thereby providing a comprehensive characterization of the behavior of optimal approximation schemes in these parameter ranges.

The main objective of this paper is to analyze the asymptotic behavior of Kolmogorov, linear, and sampling widths for functions in Gaussian Sobolev spaces $W^s_p(\mathbb{R}^d, \gamma)$ of smoothness $s > 0$, where the approximation error is measured in the Gaussian Lebesgue space $L_q(\mathbb{R}^d, \gamma)$.
Our approach is based on a decomposition technique developed in \cite{DN23}, combined with refined estimates of the kernel associated with fractional Sobolev spaces. The obtained results cover the parameter regimes $s > 0$ and $1 \leq q < p < \infty$, as well as the case $p = q = 2$, where distinct behaviors of approximation rates can be observed. More precisely, in the case $1 \leq q < p < \infty$ and $s > 0$, we establish for the Kolmogorov and linear widths that
	\begin{equation*}
	\begin{aligned}
		d_n\big({\boldsymbol{W}}^s_{p}(\mathbb{R}^d,\gamma),L_q(\RRd,\gamma )\big)
		& \asymp \lambda_n\big({\boldsymbol{W}}^s_{p}(\mathbb{R}^d,\gamma),L_q(\RRd,\gamma) \big)
	 	\asymp n^{-s/d}. 
	\end{aligned} 
\end{equation*}
and for sampling widths if $s>d/p$
	\begin{equation*}
	\begin{aligned}
	 \varrho_n\big({\boldsymbol{W}}^s_{p}(\mathbb{R}^d,\gamma),L_q(\RRd,\gamma) \big)	\asymp n^{-s/d}. 
	\end{aligned} 
\end{equation*}
In the case $s>0$ and $p=q=2$ we get
	\begin{equation*} 
	d_n\big({\boldsymbol{W}}^s_{2}(\mathbb{R}^d,\gamma),L_2(\RRd,\gamma )\big)
	= \lambda_n\big({\boldsymbol{W}}^s_{2}(\mathbb{R}^d,\gamma),L_2(\RRd,\gamma) \big)
	\asymp 
	n^{-\frac{s}{2d}}.
\end{equation*}
Moreover, in this paper we also obtain the optimal convergence rates for the Kolmogorov and linear $n$-widths of fractional Gaussian Sobolev spaces of mixed smoothness $W^s_{p,\mix}(\mathbb{R}^d,\gamma)$ in the range $2 \leq p < \infty$, $1 \leq q < p$, $s > 0$, and $s \notin \mathbb{N}$
\begin{equation*}
		d_n\big({\boldsymbol{W}}^s_{p,\mix}(\mathbb{R}^d,\gamma),L_q(\RRd,\gamma )\big)
 \asymp \lambda_n\big({\boldsymbol{W}}^s_{p,\mix}(\mathbb{R}^d,\gamma),L_q(\RRd,\gamma) \big)
\asymp n^{-s}(\log n)^{(d-1)(s+\frac{1}{2}-\frac{1}{p})}. 
\end{equation*}

The paper is organized as follows. In Section~\ref{Numerical integration}, we introduce Gaussian Sobolev spaces and fractional Gaussian Sobolev spaces, and discuss their fundamental properties. Section~\ref{sec:numer} is devoted to the estimation of Kolmogorov, linear and sampling $n$-widths for embeddings of  Gaussian Sobolev spaces. In Section~\ref{sec:sec4}, we extend this analysis to the case of fractional Gaussian Sobolev spaces of mixed smoothness, deriving corresponding results for Kolmogorov and linear $n$-widths.
%%%%%%%%%%%%%%%%%%%%%%%%%%%%%%%%%%

\noindent
{\bf Notation.}  
The letter $d$ is always used to denote the underlying dimension of $\RR^d$, $\NN^d$, etc. Vectors in $\RR^d$ are denoted by boldface letters. For $\bx \in \RR^d$, we write $\bx := (x_1, \ldots, x_d)$, where $x_i$ denotes the $i$th coordinate of $\bx$.
For $1 \le p < \infty$, we define $|\bx|_p := \big(\sum_{i=1}^d |x_i|^p\big)^{1/p}$ and $|\bx|_\infty := \max{|x_1|, \ldots, |x_d|}$. In the case $p = 2$, we simply write $|\bx|$.
Let $A_n$ and $B_n$ be quantities depending on $n$ in an index set $J$. We write $A_n \ll B_n$ if there exists a constant $C > 0$, independent of $n$, such that $A_n \le C B_n$ for all $n \in J$. We write $A_n \asymp B_n$ if both $A_n \ll B_n$ and $B_n \ll A_n$ hold.
General positive constants are denoted by $C$, while constants depending on parameters $s, d, \ldots$ are denoted by $C_{s,d,\ldots}$. The values of these constants may vary from line to line.
For a finite set $G$, we denote by $|G|$ its cardinality. The unit ball in a Banach space $X$ is denoted by $\bX$.

%%%%%%%%%%%%%%%%%%%%%%%%%%%%%%%%%
%%%%%%%%%%%%%%%%%%%%%%%%%%%%%%%%%

\section{Gaussian and fractional Gaussian Sobolev spaces}
\label{Numerical integration}

\label{Subsec-AssemblingQuadratures}
In this section, we introduce Gaussian and fractional Gaussian Sobolev spaces defined in the sense of Gagliardo as well as via a kernel associated with the fractional Ornstein–Uhlenbeck operator. We then investigate the relationship between these two types of spaces and establish some of their fundamental properties.

Let $\rd\gamma (\bx) = \rho(\bx) \rd\bx$  be the $d$-dimensional standard Gaussian measure on $\RRd$  with the density 
$$
\rho(\bx):=(2\pi)^{-d/2} \exp\brac{-|\bx|^2/2},\ \ \bx\in \RRd.
$$
Let  $1\leq p<\infty$.  
	We define the Gaussian Lebesgue space  $L_p(\RRd,\gamma)$ to be the set of all measurable functions $f$ on $\RRd$ such that the norm
	$$
	\|f\|_{L_p(\RRd,\gamma)} : = \bigg( \int_{\RRd} |f(\bx)|^p \rd \gamma(\bx)\bigg)^{1/p}
	=
	\bigg( \int_{\RRd} |f(\bx)|^p \rho(\bx) \rd \bx\bigg)^{1/p} \ <  \ \infty. 
	$$
For $s \in \mathbb{N}$, we define the Gaussian Sobolev space $W^s_p(\mathbb{R}^d,\gamma)$ as the normed space consisting of all functions $f \in L_p(\mathbb{R}^d,\gamma)$ such that the generalized partial derivatives $D^\balpha f$ of order $\balpha$ belong to $L_p(\mathbb{R}^d,\gamma)$ for all multi-indices $\balpha \in \mathbb{N}_0^d$ with $|\balpha|_1 \le s$. The norm of a function $f$ in this space is defined by
	\begin{equation}  \label{eq:sobolev}
		\|f\|_{W^s_{p}(\RRd,\gamma)}: = \Bigg(\sum_{|\balpha|_1 \leq s} \|D^\balpha f\|_{L_p(\RRd,\gamma)}^p\Bigg)^{1/p}.
	\end{equation}

Before introducing fractional Gaussian Sobolev spaces on $\mathbb{R}^d$, we  define the classical fractional Sobolev spaces on $\mathbb{U}^d$, where $\mathbb{U} = [a,b]$ or $\mathbb{U} = \mathbb{R}$. As usual, $L_p(\mathbb{U}^d)$ denotes the Lebesgue space of functions on $\mathbb{U}^d$ equipped with the standard $L_p$-norm. For $s \in \mathbb{N}$, the space $W^s_p(\mathbb{U}^d)$ is defined analogously to $W^s_p(\mathbb{R}^d,\gamma)$ by replacing $L_p(\mathbb{R}^d,\gamma)$ in \eqref{eq:sobolev} with $L_p(\mathbb{U}^d)$.

For $s>0$ and $s \notin \mathbb{N}$, we write $s = \bar{s} + \tilde{s}$, where $\bar{s} := \lfloor s \rfloor$ is the integer part of $s$ and $\tilde{s}$ is the fractional part. 

\begin{definition}\label{def}
	Let $1\leq p<\infty$,  $s >0$ and $s\not \in \NN$. We define the fractional  Sobolev space $W_{p}^s(\UUd)$ as
	\[
	W_{p}^s(\UUd)
	:=
	\left\{
	f \in W^{\bar{s}}_{p}(\UUd)
	\; : \;
	[ f]_{W_{p}^s(\UUd)} < \infty
	\right\},
	\]
	where
	\begin{equation*} 
		[ f]_{W_{p}^s(\UUd)}
		:=
		\sum_{|\balpha|_1 = \bar{s}}
		\Bigg(
		\int_{\UUd}	\int_{\UUd}
		\frac{\big| D^\balpha f(\bx) -D^\balpha f(\by) \big|^p}{|\bx-\by|^{d+\tilde{s}p}}  
		\rd\bx \, \rd\by
		\Bigg)^{\frac{1}{p}}.	 
	\end{equation*} 
	The norm of $f\in W_{p}^s(\UUd)$ is defined by
	$$
	\|f\|_{W_{p}^s(\UUd)}:=\|f\|_{W^{\bar{s}}_{p}(\UUd)}+	[ f]_{W_{p}^s(\UUd)}.
	$$
\end{definition}
These types of spaces are sometimes referred to as Sobolev–Slobodeckij spaces, and they coincide with the Besov spaces $B^s_{p,p}(\mathbb{U}^d)$; see \cite[Theorem 2.5.12]{Tri83B}. Analogously to the above definition, we define the fractional Gaussian Sobolev spaces as follows.

\begin{definition} \label{def:Sobolev1}
	Let $1\leq p<\infty$,  $s >0$ and $s\not \in \NN$. We define the fractional Gaussian Sobolev space $W_{p,G}^s(\RRd,\gamma)$ as
	\[
	W_{p,G}^s(\RRd,\gamma)
	:=
	\left\{
	f \in W^{\bar{s}}_{p}(\RRd,\gamma)
	\; : \;
	[ f]_{W_{p,G}^s(\RRd,\gamma)} < \infty
	\right\},
	\]
	where
	\begin{equation*}\label{eq:def}
		[ f]_{W_{p,G}^s(\RRd,\gamma)}
		:=
		\sum_{|\balpha|_1 = \bar{s}}
		\Bigg(
		\int_{\RRd}	\int_{\RRd}
		\frac{\big| D^\balpha f(\bx) -D^\balpha f(\by) \big|^p}{|\bx-\by|^{d+\tilde{s}p}}  
		\rd \gamma(\bx) \, \rd\gamma(\by)
		\Bigg)^{\frac{1}{p}}.	 
	\end{equation*} 
	The norm of $f\in W_{p,G}^s(\RRd,\gamma)$ is defined by
	$$
	\|f\|_{W_{p,G}^s(\RRd,\gamma)}:=\|f\|_{W^{\bar{s}}_{p}(\RRd,\gamma)}+	[ f]_{W_{p,G}^s(\RRd,\gamma)}.
	$$
\end{definition}

%%%%%%%%%%%%%%%%%%%%%%%%

Next, we introduce fractional Gaussian Sobolev spaces defined via kernel 
related to the fractional Ornstein-Uhlenbeck operator. 
In order to do so, we introduce the Ornstein--Uhlenbeck semigroup and its generator $\Delta_\gamma$. Let $t > 0$. For $v \in L_1(\RRd,\gamma)$  the Ornstein--Uhlenbeck semigroup is defined as
	\[
	e^{t\Delta_\gamma} v(\bx)
	:=
	\int_{\RRd} M_t(\bx,\by)\, v(\by)\, \rd\gamma(\by),\ \ \bx \in \RRd,
	\]
	where $M_t(\bx,\by)$ is the Mehler kernel
	\[
	M_t(\bx,\by)
	:=
	\frac{1}{(1 - e^{-2t})^{d/2}}
	\exp\left(
	- \frac{e^{-2t}|\bx|^2 - 2e^{-t} \bx \cdot \by + e^{-2t}|\by|^2}{2(1 - e^{-2t})}
	\right).
	\]
Ornstein--Uhlenbeck operator in $\RRd$ is a second-order partial differential operator
given by
	\[
	\Delta_\gamma v(\bx) = \Delta v(\bx) - \bx \cdot \nabla v(\bx).
	\]
We refer the reader to \cite{LMP20} and the references therein for the main properties of $e^{t\Delta_\gamma}$ and $\Delta_\gamma$.
	
For $\sigma \in (0,1)$, we define the fractional Ornstein--Uhlenbeck operator by means of spectral decomposition via the Bochner subordination formula, i.e.,
\begin{equation}\label{eq:OU-operator}
\begin{aligned}
	(-\Delta_\gamma)^\sigma v(\bx)
	&:= \frac{1}{\Gamma(-\sigma)} \int_0^\infty \frac{e^{t\Delta_\gamma}v(\bx) - v(\bx)}{t^{\sigma+1}}\, \rd t \\
	&= \frac{1}{\Gamma(-\sigma)} \int_0^\infty \frac{\rd t}{t^{\sigma+1}} \int_{\RRd} M_t(\bx,\by)\,(v(\by) - v(\bx))\, \rd\gamma(\by) \\
	&= \frac{1}{\Gamma(-\sigma)} \int_{\RRd} (v(\bx) - v(\by))\, K_{2\sigma}(\bx,\by)\, \rd\gamma(\by),
\end{aligned}	 
\end{equation}
where
$\Gamma$ denotes the gamma function and
 \begin{equation*}
	K_\sigma(\bx,\by) := \int_0^\infty \frac{M_t(\bx,\by)}{t^{\frac{\sigma}{2}+1}}\, \rd t \,.
\end{equation*}
Note that the right-hand side in \eqref{eq:OU-operator} has to be understood in the Cauchy principal value sense.  

\begin{definition}\label{def:Sobolev2}
	Let $1\leq p<\infty$,  $s >0$ and $s\not \in \NN$. We define the fractional Gaussian Sobolev space $W_{p}^s(\mathbb{R}^d,\gamma)$ as
	\[
	W_{p}^s(\mathbb{R}^d,\gamma)
	:=
	\left\{
	f \in W^{\bar{s}}_{p}(\mathbb{R}^d,\gamma)
	\; ; \;
	[ f]_{W_{p}^s(\RRd,\gamma)} < \infty
	\right\},
	\]
	where
	\begin{equation*} 
		[ f]_{W_{p}^s(\mathbb{R}^d,\gamma)}
		:=
		\sum_{|\balpha|_1 = \bar{s}}
		\Bigg(
		\int_{\RRd}	\int_{\RRd}
		\big| D^\balpha f(\bx) -D^\balpha f(\by) \big|^p K_{p\tilde{s}}(\bx,\by)
		\rd\gamma(\bx) \, \rd\gamma(\by)
		\Bigg)^{\frac{1}{p}}.	 
	\end{equation*} 
	The norm of $f\in W_{p}^s(\mathbb{R}^d,\gamma)$ is defined by
	$$
	\|f\|_{W_{p}^s(\mathbb{R}^d,\gamma)}:=\|f\|_{W^{\bar{s}}_{p}(\mathbb{R}^d,\gamma)}+	[ f]_{W_{p}^s(\mathbb{R}^d,\gamma)}.
	$$
\end{definition}

In the following we collect some properties of the kernel $K_\sigma(\bx,\by)$ which is useful to formulate our results. 
\begin{lemma}\label{lem:help}For $\bx,\by\in \RRd$ and $\sigma>0$ we have
	$$ \frac{2^{\sigma+\frac{d}{2}}\Gamma( \frac{\sigma+d}{2})}{|\bx-\by|^{d+\sigma}} \leq 
	K_\sigma(\bx,\by) \,.
	$$
\end{lemma}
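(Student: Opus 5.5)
The plan is to bound the Mehler kernel $M_t(\bx,\by)$ from below, for each fixed $t>0$, by a Euclidean heat kernel in $\bx-\by$ alone. After that, the integral in $t$ defining $K_\sigma$ is an explicit Gamma integral. Throughout, write $r:=|\bx-\by|$; we may assume $r>0$.

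\emph{Step 1 (rewrite the exponent).} Using $-2\bx\cdot\by=|\bx-\by|^2-|\bx|^2-|\by|^2$, the numerator of the exponent in $M_t$ becomes
\[
e^{-2t}|\bx|^2-2e^{-t}\bx\cdot\by+e^{-2t}|\by|^2
= e^{-t}|\bx-\by|^2-(e^{-t}-e^{-2t})\big(|\bx|^2+|\by|^2\big).
\]
Since $e^{-t}-e^{-2t}\ge 0$ for $t>0$, the second term only increases $M_t$, and we may drop it:
\[
M_t(\bx,\by)\ \ge\ (1-e^{-2t})^{-d/2}\exp\Big(-\frac{e^{-t}\,r^2}{2(1-e^{-2t})}\Big).
\]

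\emph{Step 2 (compare with the heat kernel).} We use two elementary inequalities:
\[
1-e^{-2t}\le 2t, \qquad \frac{e^{-t}}{1-e^{-2t}}=\frac{1}{2\sinh t}\le \frac1{2t}.
\]
The first gives $(1-e^{-2t})^{-d/2}\ge(2t)^{-d/2}$. The second makes the exponent at least $-r^2/(4t)$. Together,
\[
M_t(\bx,\by)\ \ge\ (2t)^{-d/2}e^{-r^2/(4t)} .
\]

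\emph{Step 3 (integrate in $t$).} Inserting this into the definition of $K_\sigma$ gives
\[
K_\sigma(\bx,\by)\ \ge\ 2^{-d/2}\int_0^\infty t^{-\frac{d+\sigma}{2}-1}e^{-r^2/(4t)}\,\rd t .
\]
Substitute $u=r^2/(4t)$. The integral turns into $(4/r^2)^{\frac{d+\sigma}{2}}\Gamma\big(\tfrac{d+\sigma}{2}\big)$. Hence
\[
K_\sigma(\bx,\by)\ \ge\ 2^{-d/2}\,2^{d+\sigma}\,\Gamma\Big(\frac{d+\sigma}{2}\Big)\,r^{-(d+\sigma)}
=\frac{2^{\sigma+\frac d2}\Gamma(\frac{\sigma+d}{2})}{|\bx-\by|^{d+\sigma}},
\]
which is exactly the claimed constant.

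The main point needing care is Step 1: one has to check that the discarded term $(e^{-t}-e^{-2t})(|\bx|^2+|\by|^2)/(2(1-e^{-2t}))$ enters the exponent with a positive sign, so that dropping it yields a lower bound. Once that is settled, the rest is the routine inequalities of Step 2 and a standard Gamma integral.
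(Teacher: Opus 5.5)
Your proof is correct. Step 1 is exactly the factorization \eqref{eq:Mehler}: the factor you discard is $\exp\big(\frac{e^{-t}(|\bx|^2+|\by|^2)}{2(1+e^{-t})}\big)\ge 1$. The inequalities $1-e^{-2t}\le 2t$ and $\sinh t\ge t$ give $M_t(\bx,\by)\ge (2t)^{-d/2}e^{-|\bx-\by|^2/(4t)}$ for every $t>0$. The substitution $u=|\bx-\by|^2/(4t)$ then produces precisely the constant $2^{\sigma+\frac d2}\Gamma(\frac{\sigma+d}{2})$.

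The paper does not prove this lemma at all; it only refers to Lemma 2.8 of [CCMP22]. So your argument is a self-contained replacement for a citation rather than a competing technique. It uses the same two ingredients the paper uses in proving Lemma \ref{lem:estimate}: the factorization of the Mehler kernel and the Gamma integral. The difference is that your comparison with the Euclidean heat kernel holds for all $t>0$, whereas the paper there only uses the asymptotic equivalence as $t\to 0$.

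The citation buys brevity. Your version lets the reader check the exact constant on the spot. As a by-product, it shows that the constant $C_{t_0}$ in the proof of Lemma \ref{lem:estimate}(3) can be taken equal to $1$.
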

This lemma was proved in \cite[Lemma 2.8]{CCMP22}. We have some further estimates of the kernel $K_\sigma(\bx,\by)$.
\begin{lemma}  \label{lem:estimate}
	Let $\sigma>0$.
	\begin{enumerate}
		\item[(1)] If $\bx,\by\in [-\ell,\ell]^d$, $\ell>0$, then it holds
		$$
		K_\sigma(\bx,\by) \le \frac{C_\ell}{|\bx-\by|^{d+\sigma}}.
		$$
		\item[(2)] If $|\bx-\by|\geq M$, then it holds
$$
K_\sigma(\bx,\by) \le C_M \exp\bigg(\frac{|\bx|^2+|\by|^2}{4}\bigg).
$$
		\item[(3)] Let $\bell\in \RRd$ and $t_0>0$. If $\bx,\by\in [0,1]^d+\bell$, then
		$$
		K_\sigma(\bx,\by)\geq C_{d,t_0} 	\exp\bigg(\frac{|\bx|^2+|\by|^2}{4} \Big(1-\frac{ 1-e^{-t_0}}{ 1+e^{-t_0}}\Big)\bigg) \frac{1}{|\bx-\by|^{d+\sigma}} ,
		$$
where $C_{d,t_0}$ is a positive constant depending only on $d$ and $t_0$.
	\end{enumerate}
	
\end{lemma}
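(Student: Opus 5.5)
The plan is to work directly with the Mehler kernel. Writing $r=e^{-t}\in(0,1)$, we have $1-e^{-2t}=1-r^2$. The exponent can be rewritten by completing the square:
\[
\frac{r^2|\bx|^2-2r\,\bx\cdot\by+r^2|\by|^2}{2(1-r^2)}
=\frac{r|\bx-\by|^2}{2(1-r^2)}-\frac{r(1-r)(|\bx|^2+|\by|^2)}{2(1-r^2)}
=\frac{r|\bx-\by|^2}{2(1-r^2)}-\frac{r(|\bx|^2+|\by|^2)}{2(1+r)}.
\]
Hence
\[
M_t(\bx,\by)=(1-r^2)^{-d/2}\exp\Big(-\frac{r|\bx-\by|^2}{2(1-r^2)}\Big)\exp\Big(\frac{r}{2(1+r)}(|\bx|^2+|\by|^2)\Big).
\]
The last factor is increasing in $r$ and is bounded by $\exp\big((|\bx|^2+|\by|^2)/4\big)$. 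All three estimates will come from splitting $\int_0^\infty$ at a fixed time.

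For (2), I bound the last factor by $\exp((|\bx|^2+|\by|^2)/4)$ and use $|\bx-\by|\ge M$. It then remains to show that $\int_0^\infty t^{-\sigma/2-1}(1-e^{-2t})^{-d/2}\exp\big(-\frac{e^{-t}M^2}{2(1-e^{-2t})}\big)\,\rd t<\infty$.
\begin{itemize}
\item Near $t=0$ we have $1-e^{-2t}\sim 2t$, so the integrand is about $t^{-\sigma/2-1-d/2}e^{-cM^2/t}$, which is integrable.
\item For $t\ge1$ the integrand is at most $C t^{-\sigma/2-1}$, which is integrable since $\sigma>0$.
\end{itemize}

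For (1), I split at $t=1$.
\begin{itemize}
\item On $t\ge1$ we have $r\le e^{-1}$, and all factors are bounded by constants depending on $\ell$ (since $|\bx|,|\by|\le\sqrt d\,\ell$). This part is $\le C_\ell\le C_\ell' |\bx-\by|^{-d-\sigma}$, because $|\bx-\by|\le 2\sqrt d\,\ell$.
\item On $t\le1$ the Gaussian-growth factor is $\le e^{d\ell^2/2}$, $1-r^2\asymp t$ and $r\ge e^{-1}$. The integral is therefore bounded by $C_\ell\int_0^\infty t^{-\sigma/2-1-d/2}e^{-c|\bx-\by|^2/t}\,\rd t$. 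The substitution $t=|\bx-\by|^2u$ turns this into $C|\bx-\by|^{-d-\sigma}$.
\end{itemize}

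For (3), the main obstacle is getting the stated exponent, since the growth factor's coefficient $\frac{r}{2(1+r)}$ vanishes as $r\to0$. I restrict the integral to $t\in(0,t_0]$, where $r\ge e^{-t_0}$. There $\frac{r}{2(1+r)}\ge \frac{e^{-t_0}}{2(1+e^{-t_0})}$, and
\[
\frac{e^{-t_0}}{2(1+e^{-t_0})}=\frac14\cdot\frac{2e^{-t_0}}{1+e^{-t_0}}=\frac14\Big(1-\frac{1-e^{-t_0}}{1+e^{-t_0}}\Big).
\]
This is exactly the coefficient in the statement, so the growth factor can be pulled out. What remains is to bound from below
\[
\int_0^{t_0} t^{-\sigma/2-1}(1-r^2)^{-d/2}\exp\Big(-\frac{r|\bx-\by|^2}{2(1-r^2)}\Big)\,\rd t .
\]
\begin{itemize}
\item Using $1-r^2\le 2t$ and $r\le1$, the integrand is at least $(2t)^{-d/2}t^{-\sigma/2-1}\exp(-c|\bx-\by|^2/t)$ with $c$ absolute. (Check: $1-e^{-2t}\ge 2t e^{-2t_0}$ on $[0,t_0]$, so $\frac{r}{1-r^2}\le \frac{e^{2t_0}}{2t}$; take $c=e^{2t_0}/4$.)
\item Substituting $t=|\bx-\by|^2u$ gives $|\bx-\by|^{-d-\sigma}\int_0^{t_0/|\bx-\by|^2}(\cdots)\,\rd u$.
\item Since $\bx,\by$ lie in a unit cube, $|\bx-\by|^2\le d$, so the upper limit is at least $t_0/d$. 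The integral is then bounded below by a positive constant depending on $d,t_0,\sigma$.
\end{itemize}
This yields (3), with the constant possibly also depending on $\sigma$.
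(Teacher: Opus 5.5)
Your proof is correct and takes essentially the paper's route: the same factorization of $M_t$ into a Gaussian-growth factor with coefficient $\frac{r}{2(1+r)}=\frac14\big(1-\frac{1-e^{-t}}{1+e^{-t}}\big)$ times a heat-type factor; a split at $t=1$ with the scaling $t=|\bx-\by|^2u$ for (1)--(2); and restriction to $(0,t_0]$, using monotonicity of that coefficient, for (3). Your explicit non-asymptotic bounds ($1-e^{-2t}\asymp t$ and $r\ge e^{-1}$ on $(0,1]$) are in fact slightly more careful than the paper's replacement of the heat-type factor by $\exp(-|\bx-\by|^2/(4t))$: that replacement is not a uniform upper bound when $|\bx-\by|$ is unbounded, as in (2). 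Your remark that the constant in (3) also depends on $\sigma$ applies equally to the paper's constant.
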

\begin{proof}
	By the definition of Mehler kernel we have
	\begin{equation}\label{eq:Mehler}
		\begin{aligned}
			M_t(\bx,\by)
			&=	\exp\bigg(\frac{|\bx|^2+|\by|^2}{4}\bigg(1-\frac{(1-e^{-t})}{(1+e^{-t})}\bigg)\bigg)\frac{1}{(1 - e^{-2t})^{d/2}}
			\exp\left(
			- \frac{e^{-t}|\bx-\by|^2}{2(1 - e^{-2t})}
			\right).
		\end{aligned}
	\end{equation}
	Observe that
	\[
	\frac{1}{(1 - e^{-2t})^{d/2}}
	\exp\left(
	- \frac{e^{-t}|\bx-\by|^2}{2(1 - e^{-2t})}
	\right)
	\sim \frac{1}{(2 t)^{d/2}} \exp\left(
	- \frac{|\bx-\by|^2}{4t}
	\right), \ \ t\to 0.
	\]
Therefore
	\begin{equation}\label{eq:K-sigma}
		\begin{aligned}
			{K}_\sigma(\bx,\by)
		&	=\int_0^1 \frac{M_t(\bx,\by)}{t^{\frac{\sigma}{2}+1}}\rd t +\int_1^\infty \frac{M_t(\bx,\by)}{t^{\frac{\sigma}{2}+1}}\rd t \\
		&\leq  C_1 \exp\bigg(\frac{|\bx|^2+|\by|^2}{4}\bigg) \bigg( \int_{1}^{\infty}  
			\frac{\rd t}{t^{\frac{\sigma}{2}+1}} +\int_0^{1}\frac{\exp\big(
				- \frac{|\bx-\by|^2}{4t}
				\big)}{(2 t)^{d/2}t^{\frac{\sigma}{2}+1}}\rd t
			\bigg)
			\\
			&\leq C_2 \exp\bigg(\frac{|\bx|^2+|\by|^2}{4}\bigg) \bigg(  1+  \int_0^{\infty}\frac{\exp\big(
				- \frac{|\bx-\by|^2}{4t}
				\big)}{t^{\frac{\sigma}{2}+\frac{d}{2}+1}}\rd t
			\bigg).
		\end{aligned}
	\end{equation}
Performing  change of variable in the second term on the right-hand side we get
$$
\int_0^{\infty}\frac{\exp\big(
	- \frac{|\bx-\by|^2}{4t}
	\big)}{t^{\frac{\sigma}{2}+\frac{d}{2}+1}}\rd t= \frac{2^{d+\sigma}\Gamma(\frac{d+\sigma}{2})}{|\bx-\by|^{d+\sigma}}.
$$
Inserting this into \eqref{eq:K-sigma} we get the first two statements. For the thirst one, from \eqref{eq:Mehler} if $t\in (0,t_0)$ and $|\bx-\by|\leq 1$, then we have
	\begin{equation*}
		\begin{aligned}
			M_t(\bx,\by)
			&\geq 		\exp\bigg(\frac{|\bx|^2+|\by|^2}{4} \Big(1-\frac{ 1-e^{-t_0}}{ 1+e^{-t_0}}\Big)\bigg)\frac{1}{(1 - e^{-2t})^{d/2}}
			\exp\left(
			- \frac{e^{-t}|\bx-\by|^2}{2(1 - e^{-2t})}
			\right)
			\\
			&\geq C_{t_0} 	\exp\bigg(\frac{|\bx|^2+|\by|^2}{4} \Big(1-\frac{ 1-e^{-t_0}}{ 1+e^{-t_0}}\Big)\bigg)\frac{1}{(2t)^{d/2}}
			\exp\left(
			- \frac{|\bx-\by|^2}{4t}
			\right).
		\end{aligned}
	\end{equation*}
Note that, the constant $C_{t_0}$ does not depend on $\bx,\by$.	Therefore, we obtain
	\begin{equation}\label{eq:the-third}
		\begin{aligned}
			K_\sigma(\bx,\by)&\geq  \int_0^{t_0} \frac{M_t(\bx,\by)}{t^{\frac{\sigma}{2}+1}} \rd t \\
			&\geq  C_{t_0} 	\exp\bigg(\frac{|\bx|^2+|\by|^2}{4} \Big(1-\frac{ 1-e^{-t_0}}{ 1+e^{-t_0}}\Big)\bigg) \int_0^{t_0}\frac{	\exp\left(
				- \frac{|\bx-\by|^2}{4t}\right)}{t^{\sigma/2+d/2+1}}
		\rd t\,.
		\end{aligned}
	\end{equation}
Performing a change of variables in the second term on the right-hand side and using $|\bx-\by|\leq \sqrt{d}$ we get
\begin{equation*}
\begin{aligned}
\int_0^{t_0}\frac{\exp\big(
	- \frac{|\bx-\by|^2}{4t}
	\big)}{t^{\frac{\sigma}{2}+\frac{d}{2}+1}}\rd t &= \frac{2^{\sigma+d}}{|\bx-\by|^{d+\sigma}}\int_{\frac{|\bx-\by|^2}{4t_0}}^{\infty}e^{-\xi} \xi^{\frac{\sigma}{2}+\frac{d}{2}-1}\rd\xi
	\\
	&\geq \frac{2^{\sigma+d}}{|\bx-\by|^{d+\sigma}}\int_{\frac{\sqrt{d}}{4t_0}}^{\infty}e^{-\xi} \xi^{\frac{\sigma}{2}+\frac{d}{2}-1}\rd\xi.	 
\end{aligned}
\end{equation*}
From this and \eqref{eq:the-third} we obtain the third estimate. 	The proof is completed.
	\hfill
\end{proof}

From the above lemmas we deduce the following results.
\begin{lemma}\label{lem:embedding}
	Let $1\leq p<\infty$.
	\begin{itemize}
		\item[(1)] If $s>0$ and $s\not \in \NN$, then  
		$W_{p}^s(\mathbb{R}^d,\gamma)$ is continuously embedded into  $W^s_{p,G}(\RRd,\gamma)$. 
		\item [(2)] Let $s>\frac{d}{p}$ or $s=d$ and $p=1$. Then $f\in W^s_{p}(\RRd,\gamma)$  is continuous on $\RRd$.
	\end{itemize}
\end{lemma}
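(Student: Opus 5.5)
Part (1) follows from comparing the two seminorms termwise, using Lemma~\ref{lem:help}. For every multi-index $\balpha$ with $|\balpha|_1=\bar s$, we apply Lemma~\ref{lem:help} with $\sigma=p\tilde s>0$ and obtain
\[
\frac{1}{|\bx-\by|^{d+\tilde s p}} \le \frac{1}{2^{p\tilde s+\frac d2}\Gamma\big(\frac{p\tilde s+d}{2}\big)}\,K_{p\tilde s}(\bx,\by),\qquad \bx,\by\in\RRd .
\]
We multiply by $|D^\balpha f(\bx)-D^\balpha f(\by)|^p$ and integrate against $\rd\gamma(\bx)\,\rd\gamma(\by)$. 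This gives $[f]_{W^s_{p,G}(\RRd,\gamma)}\le C_{s,p,d}\,[f]_{W^s_p(\RRd,\gamma)}$. The $W^{\bar s}_p(\RRd,\gamma)$ parts of the two norms coincide, so $\|f\|_{W^s_{p,G}}\le C\|f\|_{W^s_p}$, which is the continuous embedding.

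For part (2), continuity is a local property, so we localize to cubes $Q_\ell:=[-\ell,\ell]^d$ and show that $f|_{Q_\ell}$ has a continuous representative for every $\ell>0$. On $Q_\ell$ the Gaussian density satisfies $\rho(\bx)\ge (2\pi)^{-d/2}e^{-d\ell^2/2}$. Hence $\|g\|_{L_p(Q_\ell)}\le C_\ell\|g\|_{L_p(\RRd,\gamma)}$ for every $g$, and so $f\in W^{\bar s}_p(\RRd,\gamma)$ implies $f|_{Q_\ell}\in W^{\bar s}_p(Q_\ell)$. If $s\in\NN$, this already gives $f|_{Q_\ell}\in W^s_p(Q_\ell)$.

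If $s\notin\NN$, we also need the Gagliardo seminorm on $Q_\ell$. Here we use the lower bound in Lemma~\ref{lem:help} (equivalently, part (1) just proved) together with $\rho(\bx)\rho(\by)\ge c_\ell>0$ on $Q_\ell\times Q_\ell$:
\[
\int_{Q_\ell}\int_{Q_\ell}\frac{|D^\balpha f(\bx)-D^\balpha f(\by)|^p}{|\bx-\by|^{d+\tilde sp}}\,\rd\bx\,\rd\by \le C_\ell \int_{\RRd}\int_{\RRd}|D^\balpha f(\bx)-D^\balpha f(\by)|^p K_{p\tilde s}(\bx,\by)\,\rd\gamma(\bx)\,\rd\gamma(\by).
\]
Therefore $f|_{Q_\ell}\in W^s_p(Q_\ell)$ in the sense of Definition~\ref{def}, and this space coincides with $B^s_{p,p}(Q_\ell)$.

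We then invoke the classical Sobolev/Besov embedding on the bounded Lipschitz domain $Q_\ell$. For $s>d/p$, both $W^s_p(Q_\ell)$ with $s\in\NN$ and $B^s_{p,p}(Q_\ell)$ embed into $C(Q_\ell)$. In the borderline case $s=d$, $p=1$ we have $s\in\NN$, and $W^d_1(Q_\ell)\hookrightarrow C(Q_\ell)$ holds classically, for instance by writing $f$ as the iterated integral of $\partial_1\cdots\partial_d f\in L_1$ after extension. So $f$ agrees a.e. on each $Q_\ell$ with a continuous function. These continuous representatives are compatible on overlapping cubes, since two continuous functions that agree a.e. agree everywhere. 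Letting $\ell\to\infty$ yields a continuous representative on all of $\RRd$.

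The main point needing care is this localization step, namely that the Gaussian-weighted seminorm controls the unweighted Gagliardo seminorm on compact cubes. It is settled by the kernel lower bound in Lemma~\ref{lem:help} together with the positivity of $\rho$ on compact sets. Everything else is quoted from classical embedding theorems, such as \cite{Tri83B}.
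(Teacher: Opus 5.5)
Your proposal is correct and follows the paper's proof. Part (1) is the same termwise comparison of seminorms via Lemma~\ref{lem:help}, and part (2) is the same localization to cubes $[-\ell,\ell]^d$ (using positivity of $\rho$ and the kernel lower bound to reach the unweighted space $W^s_p([-\ell,\ell]^d)$, then quoting the classical embedding into continuous functions), with your separate treatment of integer $s$ (in particular $s=d$, $p=1$ via $W^d_1\hookrightarrow C$) being slightly more careful than the paper's blanket identification of $W^s_p$ with $B^s_{p,p}$ on the cube.
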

\begin{proof} {\it (1)} If $f\in W^s_{p,G}(\RRd,\gamma)$, then by Lemma \ref{lem:help} we have
	$$
	[ f]_{W_{p,G}^s(\RRd,\gamma)} \leq C 	[ f]_{W_{p}^s(\RRd,\gamma)}
	$$
	which implies 
	$$
	\| f\|_{W_{p,G}^s(\RRd,\gamma)} \leq C 	\| f\|_{W_{p}^s(\RRd,\gamma)}
	$$
	and hence
	$$W_{p}^s(\mathbb{R}^d,\gamma)\hookrightarrow W^s_{p,G}(\RRd,\gamma).$$
This proves the first statement.
	\\
	{\it (2)} Let $f\in W^s_{p}(\RRd,\gamma)$. For any $\ell>0$, consider the restriction of $f$ to the cube $\UUd_\ell:=[-\ell,\ell]^d$, denoted by $f_\ell$. Then by Definitions \ref{def}, \ref{def:Sobolev2} and Lemma \ref{lem:help} we get $f_\ell\in \Wap(\UUd_\ell)$. It is well known that $\Wap(\UUd_\ell)=B^s_{p,p}(\UUd_\ell)$ is continuously embedded into the space of bounded continuous functions on $\UUd_\ell$, see, e.g., \cite{SiTr95}. Hence $f_\ell$ is continuous on $\UUd_\ell$. Since $\ell>0$ is arbitrary it follows that 
	$f$ is   continuous on $\RRd$.
	\hfill
\end{proof}
\begin{lemma}
	Let $s>0$ and $1\leq p,q < \infty$. 
\begin{itemize}
	\item[(1)]  If $1\leq q\leq p<\infty$ then we have the continuous embedding $\Wpgamma\to L_q(\RRd,\gamma)$.
\item[(2)] If $1\leq p<q< \infty$ then we  do not have have the continuous embedding, i.e., $\Wpgamma\not\hookrightarrow L_q(\RRd,\gamma)$
\end{itemize} 
\end{lemma}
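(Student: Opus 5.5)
Part (1) needs no smoothness beyond the zeroth-order term. Every $f\in\Wpgamma$ lies in $L_p(\RRd,\gamma)$: for $s\in\NN$ the $\balpha=0$ term of \eqref{eq:sobolev} gives this, and for $s\notin\NN$ it follows from $f\in W^{\bar s}_p(\RRd,\gamma)$ by Definition \ref{def:Sobolev2}. In both cases
$$\|f\|_{L_p(\RRd,\gamma)}\le \|f\|_{\Wpgamma}.$$
Since $\gamma$ is a probability measure, H\"older's inequality gives $\|f\|_{L_q(\RRd,\gamma)}\le \|f\|_{L_p(\RRd,\gamma)}$ for $q\le p$. Combining the two inequalities yields the continuous embedding with constant $1$.

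For part (2), the failure of the embedding comes from the Gaussian weight at infinity, not from local smoothness. I will construct a function with arbitrary smoothness that lies in $\Wpgamma$ but not in $L_q(\RRd,\gamma)$. Take
$$f(\bx)=\exp\big(a|\bx|^2\big)$$
with $\frac{1}{2q}< a<\frac{1}{2p}$; this interval is nonempty because $p<q$. More precisely, take $f=\exp(a\langle\bx\rangle^2)$ with $\langle\bx\rangle^2:=1+|\bx|^2$, which only multiplies by a constant and keeps $f$ smooth. Then
$$\int |f|^q\rho\,\rd\bx=\infty \quad\text{since } aq>\tfrac12.$$
Every derivative $D^\balpha f$ is a polynomial times $e^{a|\bx|^2}$, so $|D^\balpha f|^p\rho$ is a polynomial times $e^{(ap-1/2)|\bx|^2}$, which is integrable. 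Hence $f\in W^k_p(\RRd,\gamma)$ for every $k\in\NN$, which settles all integer $s$. A closed-graph argument is unnecessary: an unbounded embedding defined on the whole space is excluded simply because the inclusion itself fails.

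For non-integer $s$, I must also show $[f]_{W^s_p(\RRd,\gamma)}<\infty$, i.e. for $g=D^\balpha f$ with $|\balpha|_1=\bar s$,
$$\iint |g(\bx)-g(\by)|^p K_{p\tilde s}(\bx,\by)\,\rd\gamma(\bx)\,\rd\gamma(\by)<\infty.$$
This is the main obstacle, because Lemma \ref{lem:estimate}(2) only gives the growth $e^{(|\bx|^2+|\by|^2)/4}$ for $K$. I would split the integral into the regions $|\bx-\by|\ge1$ and $|\bx-\by|<1$.

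On the far region, Lemma \ref{lem:estimate}(2) and $|g(\bx)-g(\by)|^p\ll |g(\bx)|^p+|g(\by)|^p$ bound the integrand by a polynomial times
$$e^{ap|\bx|^2}\,e^{-|\bx|^2/4}\,e^{-|\by|^2/4},$$
plus the symmetric term. This is integrable provided $ap<1/4$, so I would shrink the range of $a$ to $\frac1{2q}<a<\frac1{4p}$, which requires $q>2p$. That is not available in general, so a sharper kernel bound is needed. I would use the exact form \eqref{eq:Mehler}: the factor $\frac{1-e^{-t}}{1+e^{-t}}\approx t/2$ gives, for large $t$, decay of order $e^{-(|\bx|^2+|\by|^2)/2}$ times $\rho^{-1}$-type factors. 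The key is that $K(\bx,\by)\,\rho(\bx)\rho(\by)$ integrated against $e^{ap|\bx|^2}$ converges whenever $ap<1/2$. This holds because $\int M_t(\bx,\by)h(\by)\,\rd\gamma(\by)=e^{t\Delta_\gamma}h(\bx)$ is the Ornstein--Uhlenbeck semigroup, which preserves integrability of $e^{c|\by|^2}$ for $c<1/2$ with uniform-in-$t$ bounds, and the factor $t^{-\sigma/2-1}$ is integrable at infinity.

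On the near region, I would use the mean value theorem: $|g(\bx)-g(\by)|\le |\bx-\by|\sup|\nabla g|$ on the segment between $\bx$ and $\by$. The $t$-integral over small $t$ then gives $|\bx-\by|^{p-d-p\tilde s}$ times the weight, which is locally integrable since $\tilde s<1$. The Gaussian factors are controlled exactly as in the far region.
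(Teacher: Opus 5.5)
Your part (1), your counterexample, and the integer-$s$ case of part (2) are correct and follow the paper's route. The paper takes $e^{|\bx|^2/(2p)}(1+|\bx|^2)^{-m}$ and simply asserts that it lies in $\Wpgamma$ for large $m$; your strictly subcritical exponent $a\in(\frac1{2q},\frac1{2p})$ is an equally valid choice. The gap is in the fractional seminorm on the far region $|\bx-\by|\ge1$: your claim there is true, but your justification does not prove it. Write $K_\sigma=\int_0^\infty t^{-\sigma/2-1}M_t\,\rd t$ with $\sigma=p\tilde s$. The semigroup fact $\int M_t(\bx,\by)\,\rd\gamma(\by)=1$ gives $\iint_{|\bx-\by|\ge1}|g(\bx)|^pM_t(\bx,\by)\,\rd\gamma(\bx)\,\rd\gamma(\by)\le\|g\|^p_{L_p(\RRd,\gamma)}$ uniformly in $t$. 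Combined with integrability of $t^{-\sigma/2-1}$ at infinity, this handles $t\ge1$ only; a $t$-uniform bound times $\int_0^1t^{-\sigma/2-1}\,\rd t=\infty$ says nothing about small $t$. Small $t$ is exactly where your obstacle sits. For $t\in(0,1)$ and $|\bx-\by|\ge1$, bounding the two factors in \eqref{eq:Mehler} separately yields only $M_t\ll e^{(|\bx|^2+|\by|^2)/4}t^{-d/2}e^{-c/t}$, i.e.\ Lemma~\ref{lem:estimate}(2), which is what forced $ap<\frac14$. So for $\frac14\le ap<\frac12$ the far region is left open.

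The missing idea is to use $|\bx-\by|\ge1$ through the Gaussian structure of the kernel. The map $\by\mapsto M_t(\bx,\by)\rho(\by)$ is the normal density with mean $e^{-t}\bx$ and covariance $(1-e^{-2t})I$. So, with $Z$ standard normal, $\int_{|\by-\bx|\ge1}M_t(\bx,\by)\,\rd\gamma(\by)$ is the probability that $|(e^{-t}-1)\bx+\sqrt{1-e^{-2t}}\,Z|\ge1$, which is at most $\mathbf{1}_{\{t|\bx|\ge1/2\}}+Ce^{-c/t}$. Since $|g|^p\rho\ll e^{-\varepsilon|\bx|^2}$ for some $\varepsilon>0$ (because $ap<\frac12$), integrating against $|g(\bx)|^p\,\rd\gamma(\bx)$ gives $\ll e^{-c'/t}$, which beats $t^{-\sigma/2-1}$; the $|g(\by)|^p$ term follows from $M_t(\bx,\by)=M_t(\by,\bx)$. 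Alternatively, drop the spatial split altogether. Bound the $t\ge1$ part by $2^p\|g\|^p_{L_p(\RRd,\gamma)}$. For $t<1$, combine the mean value theorem, $\sup_{[\bx,\by]}|\nabla g|\ll(1+|\bx|+|\by|)^N(e^{a|\bx|^2}+e^{a|\by|^2})$, and the same representation to get $\iint|g(\bx)-g(\by)|^pM_t\,\rd\gamma\,\rd\gamma\ll t^{p/2}$, which is integrable against $t^{-\sigma/2-1}$ since $\tilde s<1$. Your near-region step is fine, though not ``as in the far region'': use the global bound $K_\sigma(\bx,\by)\ll e^{(|\bx|^2+|\by|^2)/4}(1+|\bx-\by|^{-d-\sigma})$ from \eqref{eq:K-sigma}, together with $e^{-(|\bx|^2+|\by|^2)/4}\le e^{-|\bx|^2/2+|\bx|/2}$ for $|\bx-\by|<1$. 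This is where your strict inequality $a<\frac1{2p}$ pays off. For the paper's borderline function the leftover $e^{O(|\bx|)}$ is not absorbed by the polynomial damping, so the crude bound fails even on the near region.
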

\begin{proof}
In the first part of the lemma, the case $p=q$  is trivial. The case $q<p$ is due to $W^s_p(\RRd,\gamma)\hookrightarrow L_p(\RRd,\gamma)$ and $L_p(\RRd,\gamma)\hookrightarrow L_q(\RRd,\gamma)$ by the H\"older's inequality. For the second part, consider the function $$f(\bx)=e^{\frac{|\bx|^2}{2p}}\big(1+|\bx|^2\big)^{-m},\ \ \bx\in \RRd.$$ It is clear that this function belongs to $\Wpgamma$ if $m$ large enough. However, it does not belong to $L_q(\RRd,\gamma)$ when $q >p$ for any $m$. \hfill
\end{proof}

	For $k\in \NN_0$, the normalized probabilistic Hermite polynomial
$H_k$ of degree $k$ on $\RR$ is defined by
\begin{equation*}\label{eq:hermite} 
	H_k(x) 
	:= 
	\frac{(-1)^k}{\sqrt{k!}} 
	\exp\left(\frac{x^2}{2}\right) \frac{\rd^k}{\rd x^k} \exp\left(-\frac{x^2}{2}\right)  
\end{equation*}
and the $d$-variate Hermite
polynomial $H_\bk$ for   $\bk\in \NNd_0$  
\begin{equation*}\label{H_bk}
	H_\bk(\bx) :=\prod_{j=1}^d H_{k_j}(x_j),
	\;\; \bx\in \RRd.
\end{equation*}
It is well known that the Hermite polynomials $(H_\bk)_{\bk \in \NNd_0}$ form an orthonormal basis of the Hilbert space $L_2(\RRd,\gamma)$ (see, e.g.,  \cite[Section 5.5]{Szego1939}). In particular,  every function $f \in L_2(\RRd,\gamma)$ admits a representation in terms of its Hermite series
\begin{equation}\label{H-series}
	f = \sum_{\bk \in \NNd_0} \widehat{f}(\bk) H_\bk \ \ {\rm with} \ \ \widehat{f}(\bk) := \int_{\RRd} f(\bx)\, H_\bk(\bx)\rd \gamma(\bx) 
\end{equation}
with convergence in the norm of $L_2(\RRd,\gamma)$ and there holds the Parseval's identity
\begin{equation*}\label{P-id}
	\norm{f}{L_2(\RRd,\gamma)}^2= \sum_{\bk \in \NNd_0} |\widehat{f}(\bk)|^2.
\end{equation*}
\begin{definition}
	Let $s > 0$. The space $H^s(\mathbb{R}^d,\gamma)$ consists of all functions $f \in L_2(\mathbb{R}^d,\gamma)$ that admit a representation in terms of the Hermite series \eqref{H-series} and for which the norm
	\begin{equation*}
		\|f\|_{H^s(\mathbb{R}^d,\gamma)} := \Bigg(\sum_{\bk \in \mathbb{N}_0^d} (1+|\bk|_1)^{s}  |\widehat{f}(\bk)|^2 \Bigg)^{1/2}<\infty.
	\end{equation*}
\end{definition}
%%%%%%%%%%%%%%%%%%%%%%%%%%%%
We have the following assertion.
\begin{lemma}\label{lem:embedding2}
	Let $s>0$. Then $$	W_{2}^s(\mathbb{R}^d,\gamma)=	H^s(\mathbb{R}^d,\gamma)$$ in the sense of equivalent norms.
\end{lemma}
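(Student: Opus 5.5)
We reduce the statement to a spectral identity for the Ornstein--Uhlenbeck operator, treating integer and non-integer $s$ separately. The two facts we use are that $-\Delta_\gamma H_\bk=|\bk|_1 H_\bk$ and that differentiation lowers Hermite degree:
$$
\frac{\rd}{\rd x}H_k=\sqrt{k}\,H_{k-1},
\qquad\text{so}\qquad
D^\balpha H_\bk=\sqrt{\tfrac{\bk!}{(\bk-\balpha)!}}\,H_{\bk-\balpha}.
$$

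\emph{Integer case.} For $s\in\NN$, Parseval gives
$$
\|D^\balpha f\|_{L_2(\RRd,\gamma)}^2=\sum_{\bk\ge\balpha}\frac{\bk!}{(\bk-\balpha)!}\,|\widehat f(\bk)|^2 .
$$
Summing over $|\balpha|_1\le s$, we must show that the weight $\sum_{|\balpha|_1\le s}\bk!/(\bk-\balpha)!$ is comparable to $(1+|\bk|_1)^s$. This is elementary:
\begin{itemize}
\item each product $\prod_j k_j(k_j-1)\cdots$ is at most $|\bk|_1^{|\balpha|_1}$, which gives the upper bound;
\item for the lower bound, choose $\balpha$ with $|\balpha|_1=s$ distributed proportionally to $\bk$ when $|\bk|_1\ge 2s$ (or use $\balpha$ concentrated on the largest coordinate, so that $k_{\max}\ge|\bk|_1/d$); the case of small $|\bk|_1$ is handled by the $\balpha=0$ term.
\end{itemize}
This also requires the standard justification that $D^\balpha$ acts termwise on Hermite series for weak derivatives, which follows by integrating by parts against the Gaussian.

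\emph{Fractional case.} For $s\notin\NN$, write $s=\bar s+\tilde s$. By the integer case, $\|f\|_{W^{\bar s}_2}^2\asymp\sum_\bk(1+|\bk|_1)^{\bar s}|\widehat f(\bk)|^2$. The key identity to prove is, for $g\in L_2(\RRd,\gamma)$ and $\sigma=\tilde s\in(0,1)$,
$$
\int\!\!\int|g(\bx)-g(\by)|^2K_{2\sigma}(\bx,\by)\,\rd\gamma(\bx)\,\rd\gamma(\by)=c_\sigma\sum_\bk|\bk|_1^{\sigma}|\widehat g(\bk)|^2 .
$$
We derive it from the subordination formula. Expanding the square and using the symmetry of $M_t$ together with $\int M_t(\bx,\by)\,\rd\gamma(\by)=1$, Fubini/Tonelli (the integrand is nonnegative) gives
$$
\int\!\!\int|g(\bx)-g(\by)|^2M_t\,\rd\gamma\,\rd\gamma=2\big(\|g\|^2-\langle e^{t\Delta_\gamma}g,g\rangle\big)=2\sum_\bk(1-e^{-t|\bk|_1})|\widehat g(\bk)|^2 .
$$
Integrating against $t^{-\sigma-1}\,\rd t$ and using
$$
\int_0^\infty(1-e^{-t\lambda})\,t^{-\sigma-1}\,\rd t=-\Gamma(-\sigma)\lambda^\sigma
$$
yields the identity with $c_\sigma=-2\Gamma(-\sigma)>0$. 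All manipulations involve nonnegative integrands, so they are valid in $[0,\infty]$, and both sides are finite simultaneously.

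Next apply the identity to $g=D^\balpha f$ for $|\balpha|_1=\bar s$. Then $\widehat g(\bk-\balpha)=\sqrt{\bk!/(\bk-\balpha)!}\,\widehat f(\bk)$, so
$$
[f]^2\asymp\sum_\bk\Big(\sum_{|\balpha|_1=\bar s}\frac{\bk!}{(\bk-\balpha)!}\Big)\,|\bk-\balpha|_1^{\tilde s}\,|\widehat f(\bk)|^2 .
$$
Since $|\bk-\balpha|_1=|\bk|_1-\bar s$, whenever $|\bk|_1\ge 2\bar s+1$ the weight is comparable to $|\bk|_1^{\bar s}|\bk|_1^{\tilde s}=|\bk|_1^s$, by the same combinatorial estimate as in the integer case. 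The finitely many low-degree $\bk$ are controlled by the $W^{\bar s}_2$ part of the norm. Adding the two parts gives $\|f\|_{W^s_2}^2\asymp\sum_\bk(1+|\bk|_1)^s|\widehat f(\bk)|^2$, and both inclusions follow: if $f\in H^s$, then $f\in W^{\bar s}_2$ and the seminorm is finite by the identity.

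\emph{Main obstacle.} The delicate step is the rigorous justification of the kernel identity, namely exchanging the $t$-integral with the double Gaussian integral and identifying $\int\!\!\int|g(\bx)-g(\by)|^2K_{2\sigma}$. Nonnegativity lets Tonelli handle this without principal values. What remains is checking that the exponent convention matches. Definition~\ref{def:Sobolev2} uses $K_{p\tilde s}=K_{2\tilde s}$ for $p=2$, which by \eqref{eq:OU-operator} corresponds to $\sigma=\tilde s$. Hence the seminorm is weighted by $|\bk|_1^{\tilde s}$, consistent with the $H^s$ weight $(1+|\bk|_1)^{s}$.
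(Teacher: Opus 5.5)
Your proposal is correct and follows the same route as the paper: Hermite expansion, the termwise derivative formula \eqref{eq:dick} with Parseval, and a spectral identity for the kernel seminorm applied to $D^\balpha f$. The one difference is that you derive the identity \eqref{eq:help} yourself, from the Mehler semigroup at fixed $t$ plus Tonelli, where the paper cites it, and in doing so you correct the paper's slips: the eigenvalue of $(-\Delta_\gamma)^{\tilde s}$ on $H_\bk$ is $|\bk|_1^{\tilde s}$, not $k_1^{\tilde s}+\cdots+k_d^{\tilde s}$ (the two are only comparable), and the positive constant is $-2\Gamma(-\tilde s)$, not $2\Gamma(-\tilde s)<0$.
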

\begin{proof}
First we need the representation of derivatives of $f\in W^{s}_2(\RRd,\gamma)$, $s\in \NN$, in terms of Hermite polynomials  which was proved in \cite{DILP18}. For $\balpha\in \NN_0^d$ with $|\balpha|_1\leq s$ we have
\begin{equation}\label{eq:dick}
D^{\balpha}f(\bx)=\sum_{\bk\ge\balpha}\widehat{f}\left(\bk\right)\sqrt{\frac{\bk!}{\left(\bk-\balpha\right)!}}H_{\bk-\balpha}\left(\bx\right).
\end{equation}
Hence, we get for $\balpha\in \NN_0^d$ with $|\balpha|_1\leq s$
\begin{equation*}
\begin{aligned}
\int_{\RRd}|D^\balpha f(\bx)|^2\rd \gamma(\bx) = \sum_{\bk\geq \balpha} \frac{\bk!}{\left(\bk-\balpha\right)!}|\widehat{f}(\bk)|^2 \asymp \sum_{\bk\ge\balpha}  \big(k_1^{\alpha_1}\cdot\ldots\cdot k_d^{\alpha_d}\big)|\widehat{f}(\bk)|^2 ,
\end{aligned}
\end{equation*}
with the convention $0^0=1$.
This derives the case $s \in \NN$
$$
\|f\|_{W^{s}_{p}(\mathbb{R}^d,\gamma)}\asymp \Bigg(\sum_{\bk \in \mathbb{N}_0^d} (1+|\bk|_1)^{s}  |\widehat{f}(\bk)|^2 \Bigg)^{1/2} .
$$
Now we consider the case $s\not \in \NN$. We use the following formula for $v\in W^\sigma_2(\RRd,\gamma)$
\begin{equation}\label{eq:help}
	\begin{aligned}
		\int_{\mathbb{R}^d} \rd\gamma(\bx)
		\int_{\mathbb{R}^d}
		|v(\bx) - v(\by)|^2 \, K_{2\sigma}(\bx,\by)\, \rd\gamma(\by)&=
		2\Gamma(-\sigma)\int_{\mathbb{R}^d} v(\bx)\, (-\Delta_\gamma)^\sigma v(\bx) \, \rd\gamma(\bx)
	\end{aligned}	 
\end{equation}
which can be found in \cite[page 6]{CCMP22}. For $f\in W^s_{p}(\RRd,\gamma)$ we put
 $v=D^\balpha f$ with $\balpha\in \NN_0^d$ and $|\balpha|_1=\bar{s}$. By using \eqref{eq:help} we get
\begin{equation}\label{eq:quasi-norm}
	\begin{aligned}
				[ f]_{W_{2}^s(\mathbb{R}^d,\gamma)}^2&=	2\Gamma(-\tilde{s})\int_{\mathbb{R}^d} v(\bx)\, (-\Delta_\gamma)^{\tilde{s}} v(\bx) \, \rd\gamma(\bx)
				\\
				&=	2\Gamma(-\tilde{s})\sum_{\bk\in \NN_0^d}\widehat{v}(\bk)\int_{\mathbb{R}^d} v(\bx)\, (-\Delta_\gamma)^{\tilde{s}} H_\bk(\bx) \, \rd\gamma(\bx)
							\\
				&=	2\Gamma(-\tilde{s})\sum_{\bk\in \NN_0^d}\widehat{v}(\bk)\int_{\mathbb{R}^d} v(\bx)\, \big(k_1^{\tilde{s}}+\ldots+k_d^{\tilde{s}}\big) H_\bk(\bx) \, \rd\gamma(\bx)
				\\
							&=	2\Gamma(-\tilde{s})\sum_{\bk\in \NN_0^d} \big(k_1^{\tilde{s}}+\ldots+k_d^{\tilde{s}}\big) |\widehat{v}(\bk) |^2.
	\end{aligned}
\end{equation}
Employing \eqref{eq:dick} we get 
\begin{equation*}
	\begin{aligned}
\widehat{v}(\bk) &=\int_{\RRd} D^\balpha f(\bx)H_\bk (\bx) \rd\gamma(\bx)
\\
& = \sum_{\bell\ge\balpha}\int_{\RRd}\widehat{f}\left(\bell\right)\sqrt{\frac{\bell!}{\left(\bell-\balpha\right)!}}H_{\bell-\balpha}\left(\bx\right) H_\bk(\bx)\rd\gamma(\bx)
\\
&
 =\sqrt{\frac{(\balpha+\bk)!}{\bk!}}\widehat{f}(\balpha+\bk)	.
	\end{aligned}
\end{equation*}
Inserting this into \eqref{eq:quasi-norm} we finally get
\begin{equation*}
	[ f]_{W_{2}^s(\mathbb{R}^d,\gamma)}^2= 2\Gamma(-\tilde{s})\sum_{\bk\in \NN_0^d} \big(k_1^{\tilde{s}}+\ldots+k_d^{\tilde{s}}\big)   \frac{(\balpha+\bk)!}{\bk!} |\widehat{f}(\balpha+\bk) |^2.
\end{equation*}
From this the second statement follows. 
The proof is completed. \hfill
\end{proof}

\section{Approximation for Gaussian Sobolev spaces}\label{sec:numer}
In this section, we establish the asymptotic behavior of the Kolmogorov, linear, and sampling widths for functions in Gaussian Sobolev spaces $W^s_p(\mathbb{R}^d,\gamma)$, where the error is measured in the norm of $L_q(\mathbb{R}^d,\gamma)$. Building upon known linear algorithms for these quantities in Sobolev spaces defined on the domain $\mathbb{I}^d = [-1/2,1/2]^d$, we construct corresponding linear algorithms for functions in Gaussian Sobolev spaces $W^s_p(\mathbb{R}^d,\gamma)$ that achieve the same rate of convergence for  the case $1\le q < p<\infty$.

We first define Kolmogorov, linear and sampling widths. 
Let $Y$ be a Banach space and let $F \subset Y$ be a centrally symmetric compact set. For $n \in \mathbb{N}$, the Kolmogorov $n$-width of $F$ in $Y$ is defined by
\begin{equation*}
	d_n(F,Y)= \inf_{L_{n}}\sup_{f\in F}\inf_{g\in L_n}\|f-g\|_Y, 
\end{equation*}
where the infimum is taken over all linear subspaces $L_n \subset Y$ of dimension at most $n$.

The linear $n$-width of $F$ is defined by
$$
\lambda_n(F,Y):=\inf_{A_n} \sup_{f\in F} \|f-A_n(f)\|_Y,
$$
where the infimum is taken over all linear operators $A_n: X \to X$ with rank at most $ n$. 

Let $\Omega  $ be a domain in $\RRd$, and let $Y$ be a Banach space of functions defined on $\Omega$. Let $F \subset Y$ be a compact set and let $n \in \mathbb{N}$. Given sampling points $\{\bx_i\}_{i=1}^n \subset \Omega$, we aim to approximately recover a function $f \in F$ from the sampled values $\{f(\bx_i)\}_{i=1}^n$ by means of a linear sampling algorithm defined by
\begin{equation*} \label{R_n(f)}
	R_n(f): = \sum_{i=1}^n  f(\bx_i) \varphi_i,
\end{equation*}
where $\{\varphi_i\}_{i=1}^n$ is a family of functions in $Y$. For the sake of generality, we permit repetitions both in the set of sampling points $\{\bx_i\}_{i=1}^n$ and in the collection of functions $\{\varphi_i\}_{i=1}^n$.
For each $n \in \mathbb{N}$, the sampling $n$-width of the set $F$ in $Y$ is defined as 
$$
\varrho_n(F,Y):=\inf_{\bx_1,\ldots,\bx_n\in \Omega,\atop
	\varphi_1,\ldots,\varphi_n\in Y} \ \sup_{f\in F}
\|f- R_n(f)\|_Y.
$$
Obviously, we have the inequalities
\begin{equation*}\label{eq-relations}
	d_n(F,Y) \leq \lambda_n(F,Y)\leq \varrho_n(F,Y).
\end{equation*}
It is well known that if $Y$ is a Hilbert space, then $\lambda_n(F,Y) = d_n(F,Y)$.

Let $T$ be a linear bounded operator between two Banach spaces $X$ and $Y$. The $n$th Kolmogorov number of the linear operator $T$ is defined as 
\begin{equation*}
	d_n(T)= \inf_{L_{n-1}}\sup_{\|x\|_X\leq 1}\inf_{y\in L_{n-1}}\|Tx-y\|_Y. \label{def1}
\end{equation*}
Here the outer infimum is taken over all linear subspaces  $L_{n-1}$ of dimension ($n-1$)  in $Y$. The $n$th approximation number of the linear operator $T$ is defined as
	$$
	a_n(T):=\inf\big\{\|T-A\|: \ A\in \mathcal L(X,Y),\ \ \text{rank} (A)<n\big\}\, , \qquad n \in \NN\, . 
	$$
The Kolmogorov and approximation numbers belong to the class called $s$-numbers. If $B_X$ is the closed unit ball in $X$, then 
$$
d_{n+1}(T)=d_n(T(B_X),Y),\ \ a_{n+1}(T)=\lambda_n(T(B_X),Y).
$$ 
For a detailed exposition of the theory of widths and $s$-numbers, we refer the reader to the monographs \cite{Pin85B, Pie80B, Pie87B}.

%%%%%%%%%%%%%%%
 Denote by $\tilde{L}_q(\UUd)$ and $\tilde{W}^\alpha_p(\UUd)$ the subspaces of  $L_q(\UUd) $ and $W^s_{p}(\UUd)$, respectively,  of all functions $f$ which can be extended to the whole $\RRd$ as $\UU$-periodic  functions in each variable. The following proposition is crucial for the formulation of our main results.
%%%%%%%%%%%%%%%

\begin{prop} \label{prop:-general}
Let $1 \le q < p < \infty$ and  $a > 0$, $b \ge 0$. Let $s > 0$ in the case of Kolmogorov and linear widths, and $s > d/p$ in the case of sampling widths.
Assume that for each $m \in \mathbb{N}$, there exists a linear operator $A_m$ on $\tilde{L}_q(\mathbb{I}^d)$ of rank at most $m$ such that
\begin{equation}\label{A_m-Error-a,b-theta}
	\| g - A_m(g) \|_{\tilde{L}_q(\IId)}\leq C m^{-a} (\log m)^b \|g\|_{\tilde{W}^s_p(\IId)}, 
	\ \  g\in \tilde{W}^s_p(\IId).
\end{equation}
Then, for any $n \in \mathbb{N}$, one can construct, based on this operator, a linear operator $A_n^\gamma$ on $L_q(\mathbb{R}^d,\gamma)$ of rank at most $n$ such that
	\begin{equation*}\label{A_m-Error-a,b}
		\| f - A_{n}^\gamma(f) \|_{\Lqgamma}\leq C n^{-a} (\log n)^b \|f\|_{\Wpgamma}, 
		\ \  f\in \Wpgamma .
	\end{equation*}
Moreover, if $1 \le q < \frac{p}{2} < \infty$, then
 \begin{equation*} 
	\| f - A_{n}^\gamma(f) \|_{\Lqgamma}\leq C n^{-a} (\log n)^b \|f\|_{W^s_{p,G}(\RRd,\gamma)}, 
	\ \  f\in W^s_{p,G}(\RRd,\gamma). 
\end{equation*}
\end{prop}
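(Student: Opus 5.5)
The construction follows the decomposition technique of \cite{DN23}: we partition $\RRd$ into unit cubes, approximate on each cube with a periodized copy of $A_m$, and distribute the rank budget according to the Gaussian weight.

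\textbf{Step 1 (localization).} Fix a smooth partition of unity $\{\varphi_\bk\}_{\bk\in\ZZd}$ with $\varphi_\bk(\bx)=\varphi(\bx-\bk)$ and $\supp\varphi\subset 2\IId$ (or simply $[-1,1]^d$), and write $f=\sum_\bk f\varphi_\bk$. Each piece $f_\bk:=(f\varphi_\bk)(\cdot+\bk)$ is supported in a fixed cube that, after rescaling, sits inside $\IId$. It therefore extends to a periodic function in $\tilde W^s_p(\IId)$ with
\[
\|f_\bk\|_{\tilde W^s_p(\IId)}\le C\,e^{|\bk|^2/(2p)}\,(\text{local norm of } f \text{ near } \bk)\,(1+|\bk|)^{c}.
\]
For integer $s$ this follows from Leibniz and $\rho\asymp e^{-|\bk|^2/2}e^{O(|\bk|)}$ on the cube. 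The factor $e^{O(|\bk|)}$ is the nuisance here; it is absorbed by shifting the exponent slightly, i.e.\ we use $e^{|\bk|^2(1-\delta)/(2p)}$-type bounds and replace $\rho$ by $e^{-(1-\delta)|\bx|^2/2}$ wherever needed. For fractional $s$ we bound the local Gagliardo seminorm of $f_\bk$ by the Gaussian one. Lemma~\ref{lem:estimate}(3), with $t_0$ small, gives
\[
K_{p\tilde s}(\bx,\by)\ge C\,e^{(|\bx|^2+|\by|^2)(1-\epsilon)/4}\,|\bx-\by|^{-d-p\tilde s},
\]
and together with $\rho(\bx)\rho(\by)$ this yields weight $e^{-(1+\epsilon)(|\bx|^2+|\by|^2)/4}\approx e^{-(1+\epsilon)|\bk|^2/2}$. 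This is essentially as good as the $L_p(\gamma)$ weight, so the local fractional seminorm is $\ll e^{(1+\epsilon)|\bk|^2/(2p)}$ times the Gaussian piece. Lemma~\ref{lem:estimate}(1),(2) handle the cross terms from multiplying by $\varphi_\bk$, i.e.\ the terms $|D^\beta f(\by)|\,|\varphi_\bk(\bx)-\varphi_\bk(\by)|$.

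\textbf{Step 2 (allocation and summation).} Set $A^\gamma_n f:=\sum_{|\bk|\le R}\big(A_{m_\bk}f_\bk\big)(\cdot-\bk)\chi$, with ranks $m_\bk=\lfloor n\,e^{-\tau|\bk|^2}\rfloor$ for $|\bk|\le R\asymp\sqrt{\log n}$, so that $\sum m_\bk\ll n$. By the triangle inequality, for $|\bk|\le R$ the error on cube $\bk$ in $L_q(\gamma)$ is
\[
\ll e^{-|\bk|^2/(2q)}\,m_\bk^{-a}(\log m_\bk)^b\,\|f_\bk\|_{\tilde W^s_p}
\ll n^{-a}(\log n)^b\,e^{|\bk|^2(a\tau+\frac{1+\epsilon}{2p}-\frac1{2q})}\,\|f\|_{W^s_p(\gamma),\bk}.
\]
Since $q<p$, we choose $\tau$ and $\epsilon$ small so that the exponent is $-c|\bk|^2$. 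Hölder's inequality over $\bk$, with $\sum_\bk\|f\|^p_{\bk}\ll\|f\|^p$ by finite overlap, then gives the bound $n^{-a}(\log n)^b\|f\|$. For the tail $|\bk|>R$ we take $A\equiv 0$ and use Hölder directly: $\|f\varphi_\bk\|_{L_q(\gamma)}\le \|f\|_{L_p(\gamma),\bk}\,\gamma(\text{cube})^{1/q-1/p}\ll e^{-c|\bk|^2}$. Summed over $|\bk|>R$ this is $\ll e^{-cR^2}\le n^{-a}$ once $R=C\sqrt{\log n}$. Cubes with $m_\bk=0$ are treated as tail.

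\textbf{Step 3 (sampling and the $W^s_{p,G}$ variant).} When $A_m$ is a sampling operator (case $s>d/p$), point values make sense by Lemma~\ref{lem:embedding}(2), and the construction above uses only values of $f\varphi_\bk$, so it is again a sampling operator with $\ll n$ points. For the space $W^s_{p,G}$, only the weaker estimate with kernel $|\bx-\by|^{-d-\tilde sp}$ against $\rd\gamma\,\rd\gamma$ is available. On cube $\bk$ this carries weight $e^{-|\bk|^2}$, i.e.\ the local seminorm is $\ll e^{|\bk|^2/p}$ times the Gaussian one, a loss of a factor $2$ in the exponent. The summation in Step 2 then requires $\frac1p-\frac1{2q}<0$, that is $q<p/2$, which matches the hypothesis. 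The main obstacle is Step 1 in the fractional case: controlling the localized Gagliardo seminorm, including the cross terms from $\varphi_\bk$ and the long-range parts of the integral, with only an $e^{\epsilon|\bk|^2}$ loss. This is where parts (1)–(3) of Lemma~\ref{lem:estimate} must be combined carefully.
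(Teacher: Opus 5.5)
Your proposal is essentially the paper's proof. It uses the same partition of unity on overlapping cubes and the same application of Lemma~\ref{lem:estimate}(3) with small $t_0$, giving the local loss $e^{(1+\epsilon)|\bk|^2/(2p)}$; this is the paper's $\exp\big(\frac{|\bk|^2}{2p_*}(1+\frac{1-e^{-t_0}}{1+e^{-t_0}})\big)$, with $q<q_*<p_*<p$ absorbing the $e^{O(|\bk|)}$ factors. The rank allocation $n_\bk\approx n e^{-\frac{\delta}{2a}|\bk|^2}$ for $|\bk|\lesssim\sqrt{\log n}$ is also the same; normalize it as with the paper's $\varrho$ so the total rank is $\le n$ rather than $\ll n$. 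The H\"older tail and the $e^{|\bk|^2/p}$ loss forcing $q<p/2$ for $W^s_{p,G}$ match too. Your remark that cubes with $m_\bk=0$ belong to the tail is more careful than the paper. The paper applies the bound $n_\bk^{-a}$ up to $|\bk|=\xi_n$, where $\varrho n e^{-\frac{\delta}{2a}|\bk|^2}$ drops to about $\varrho$ and $n_\bk$ can vanish.

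One correction concerns what you call the main obstacle. Lemma~\ref{lem:estimate}(1),(2) are upper bounds on $K_\sigma$. They cannot help dominate the unweighted local seminorm by the Gaussian one, which needs lower bounds. There are also no long-range parts to control: the double integral over $\IId_{\theta,\bk}\times\IId_{\theta,\bk}$ is trivially bounded by the global one.

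Multiplication by $\varphi_\bk$ is a purely local, unweighted matter. The paper first bounds $\|f(\cdot+\bk)\|_{W^s_p(\IId_\theta)}$ using only Lemma~\ref{lem:estimate}(3) and the pointwise bound on $\rho$. It then invokes Triebel's pointwise multiplier theorem in $W^s_p(\IId_\theta)$. You can equally argue directly:
\begin{itemize}
\item $|\varphi_\bk(\bx)-\varphi_\bk(\by)|\le C|\bx-\by|$ and $\tilde s<1$ make $\int_{\IId_{\theta,\bk}}|\bx-\by|^{p-d-\tilde sp}\rd\bx$ uniformly bounded.
\item The lower-order products $D^{\boldsymbol{\beta}}f\,D^{\balpha-\boldsymbol{\beta}}\varphi_\bk$ with $|\boldsymbol{\beta}|_1<\bar s$ are controlled by local $W^1_p$ norms of $D^{\boldsymbol{\beta}}f$.
\end{itemize}
Parts (1) and (2) of the lemma are what is needed in the opposite direction, namely for the lower bound in Theorem~\ref{thm:main}.
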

\begin{proof} The proofs for the spaces $\Wpgamma$ and $W^s_{p,G}(\mathbb{R}^d,\gamma)$ are similar. We provide a detailed proof for the space $\Wpgamma$ and only comment on the proof for the space $W^s_{p,G}(\mathbb{R}^d,\gamma)$.
	
	For a fixed   $\theta>1 $ we denote the $d$-cube $\IId_\theta$ by $\IId_\theta := \big[-\frac{\theta}{2}, \frac{\theta}{2}\big]^d$, 
	$\IId_{\theta,\bk}:=\bk+\IId_\theta$  for $\bk \in \ZZd$, and
	$ f_{\bk} $ the restriction of $f$ on $\IId_{\theta,\bk}$  for a function $f$ on $\RRd$.  Let  $\kappa>\max\{s,d/p-s\}$, $\kappa\in \NN$  and $(\varphi_\bk)_{\bk \in \ZZd}$ be a partition of unity on $\RRd$ satisfying  
	\begin{itemize}
		\item[\rm{(i)}] $\varphi_\bk \in C^\infty_0(\RRd)$ and 
		$0 \le \varphi_\bk (\bx)\le 1$, \ \ $\bx \in \RRd$, \ \ $\bk \in \ZZd$;
		\item[\rm{(ii)}] the support of $\varphi_\bk$ is contained in the interior of  $\IId_{\theta,\bk}$, $\bk \in \ZZd$;
		\item[\rm{(iii)}]  $\sum_{\bk \in \ZZd}\varphi_\bk (\bx)= 1$, \ \ $\bx \in \RRd$;
		\item[\rm{(iv)}]  $\norm{\varphi_\bk }{C^\kappa(\RRd)} \le C_{\kappa,d,\theta}$,  
		$\bk \in \ZZd$.
	\end{itemize}
This implies that for $f\in W^s_{p}(\RRd,\gamma)$ we have
	\begin{align*}  
		f(\bx)
		=  \sum_{\bk \in \ZZd}f_{\bk}(\bx)\varphi_\bk(\bx).
	\end{align*}
Next we will prove that $f_\bk(\cdot+\bk)\varphi_\bk(\cdot+\bk)$ belongs to $\tilde{W}_p^{s}(\IId_\theta)$. First, we have
\begin{equation*}\label{eq:b4}
	\begin{split}
		\|f_{\bk}(\cdot+\bk)\|_{W_p^{\bar{s}}(\IId_\theta)}&=\Bigg(\sum_{|\balpha|_1 \leq \bar{s}} \|D^\balpha f_\bk(\cdot+\bk)\|_{L_p(\IId_\theta)}^p\Bigg)^{1/p}
		\\
		&=\Bigg(\sum_{|\balpha|_1 \leq \bar{s}} (2\pi)^{d/2} \int_{\IId_{\theta,\bk}}e^{\frac{|\bx|^2}{2}}|D^\balpha f_\bk(\bx)| ^p\rho(\bx)\rd\bx \Bigg)^{1/p}.
	\end{split}
\end{equation*}
Choose $q_*$ and $p_*$ such that $1\leq q<q_*<p_*<p $. When $\bx \in \IId_{\theta,\bk}$ we have 
$e^{\frac{|\bx|^2}{2}}\leq Ce^{\frac{p|\bk |^2}{2p_*}}$. Therefore,
\begin{equation}\label{eq:bar{s}}
	\|f_\bk(\cdot+\bk)\|_{W_p^{\bar{s}}(\IId_\theta)}
	\leq  Ce^{\frac{|\bk|^2}{2p_*}}\|f\|_{W^s_{p}(\RRd,\gamma)}.
\end{equation}
We choose $t_0>0$ such that
$$
\frac{1}{2p_*}\bigg(1+\frac{ 1-e^{-t_0}}{ 1+e^{-t_0}}\bigg)-\frac{1}{2q_*}<0.
$$
This is possible since $q_*<p_*$. Let $\delta>0$ such that
\begin{equation}   \label{eq:delta}
	\bigg\{	e^{\frac{|\bk |^2}{2p_*} \big(1+\frac{ 1-e^{-t_0}}{ 1+e^{-t_0}}\big)-\frac{|\bk|^2}{2q_*}}, e^{- \frac{q|\bk|^2}{2q_*}\big(1-\frac{q}{p}\big)}\bigg\}
	\leq 
	C e^{-\delta |\bk|^2}.
\end{equation}
Using Lemma \ref{lem:estimate} (3), with $\balpha\in \NN_0^d$ such that $|\balpha|_1\leq \bar{s}$ we get
\begin{equation*}
	\begin{aligned}
		&\int_{\IId_\theta}\int_{\IId_\theta}
		\frac{
			\big| D^\balpha f_\bk(\bx+\bk)  -D^\balpha f_\bk(\by+\bk)\big|^p
		}{
		|\bx-\by|^{d+\tilde{s}p}
		}
		\rd\bx	\rd\by
		\\
		&\leq  C_{d,t_0}\int_{\IId_{\theta,\bk}}\int_{\IId_{\theta,\bk}}
		\big| D^\balpha f_\bk(\bx)-D^\balpha f_\bk(\by) \big|^p e^{-\frac{|\bx|^2+|\by|^2}{4} \big(1-\frac{ 1-e^{-t_0}}{ 1+e^{-t_0}}\big)} K_{p\tilde{s}}(\bx,\by)
		\rd\bx	\rd\by
		\\
		&\leq  C_{d,t_0}\int_{\IId_{\theta,\bk}}\int_{\IId_{\theta,\bk}} 
		\big| D^\balpha f_\bk(\bx) -D^\balpha f_\bk(\by)\big|^p e^{\frac{|\bx|^2+|\by|^2}{4} \big(1+\frac{ 1-e^{-t_0}}{ 1+e^{-t_0}}\big)}K_{p\tilde{s}}(\bx,\by)
	 \rd\gamma(\bx)	\rd\gamma(\by)
		\\
		&\leq   C_{d,t_0}  e^{\frac{p|\bk |^2}{2p_*} \big(1+\frac{ 1-e^{-t_0}}{ 1+e^{-t_0}}\big)} \|f\|_{\Wpgamma}^p.
	\end{aligned}
\end{equation*}

From this and \eqref{eq:bar{s}} we find that
$$
\|f_\bk(\cdot+\bk)\|_{W^s_{p}(\IId_\theta)}\leq C_{d,t_0} e^{\frac{|\bk|^2}{2p_*} \big(1+\frac{ 1-e^{-t_0}}{ 1+e^{-t_0}}\big)} \|f\|_{\Wpgamma}.
$$	
By pointwise multiplication of $W^s_{p}(\IId_\theta)$  \cite[Theorem 2.8.2]{Tri83B},  we have  
\begin{equation*}  
	\begin{aligned} 
		\|f_{\bk}(\cdot+\bk)\varphi_{\bk}(\cdot+\bk)\|_{W^s_{p}(\IId_\theta)} 
		& 
		\leq C_{t_0} \|f_{\bk}(\cdot+\bk)\|_{W^s_{p}(\IId_\theta)}  \norm{\varphi_\bk(\cdot+\bk) }{C^\kappa(\RRd)}
		\\
		&	\leq C_{d,t_0} e^{\frac{|\bk |^2}{2p_*} \big(1+\frac{ 1-e^{-t_0}}{ 1+e^{-t_0}}\big)}\|f\|_{\Wpgamma}.
	\end{aligned}
\end{equation*}

  We define	for $n\in \NN$,
\begin{equation} \label{xi-int}	
	\xi_n =  \sqrt{\delta^{-1} 2 a(\log n)}\,,
\end{equation}
and for $\bk \in \ZZd$ with $|\bk|< \xi_n$
\begin{equation*} \label{n_bk}
	n_{\bk}= 
	\lfloor \varrho n  e^{-\frac{\delta}{2 a}|\bk|^2}  \rfloor,
\end{equation*}
where $\delta$ is in \eqref{eq:delta} and \begin{equation}\label{eq-c2-2.23}
	\varrho:=\Bigg(\sum_{j=0}^{\infty}\left[\left(2j+1\right)^{d}-\left(2j-1\right)^{d}\right]e^{-\frac{\delta}{2a}j^{2}}\Bigg)^{-1}.
\end{equation} We have 
$$
\sum_{\left|\bk\right|<\xi_{n}} n_{\bk} \leq \sum_{\left|\bk\right|<\xi_{n}} \varrho n e^{-\frac{\delta}{2 a}\left|\bk\right|^{2}}\leq \varrho n \sum_{\left|\bk\right|_{\infty}<\xi_{n}} e^{-\frac{\delta}{2 a}\left|\bk\right|_{\infty}^{2}}\leq \varrho n \sum_{j=0}^{\left\lfloor\xi_{n}\right\rfloor}\sum_{\left|\bk\right|_{\infty}=j}e^{-\frac{\delta}{2 a}j^2}.
$$
Since
$$|\{\bk \in \mathbb{Z}^{d}: \left|\bk\right|_{\infty}=j\}|=\left(2j+1\right)^{d}-\left(2j-1\right)^{d},$$ from
\eqref{eq-c2-2.23} we get
\begin{equation}\label{eq:<n}
	\sum_{\left|\bk\right|<\xi_{n}} n_{\bk} \leq \varrho n \sum_{j=0}^{\left\lfloor\xi_{n}\right\rfloor}\left[\left(2j+1\right)^{d}-\left(2j-1\right)^{d}\right]e^{-\frac{\delta}{2 a}j^{2}} \leq n.
\end{equation}
By \eqref{A_m-Error-a,b-theta} we have that
\begin{equation*}
	\| g - A_{\theta,m}(g) \|_{\tilde{L}_q(\IId_\theta)}\leq C m^{-a} (\log m)^b \|g\|_{\tilde{W}^\alpha_p(\IId_\theta)}, 
	\ \  g\in \tilde{W}^\alpha_p(\IId),
\end{equation*}
where $A_{\theta,m}$ is the induced operator of $A_{m}$ to the cube $\IId_\theta$.
From this and $(f_\bk \varphi_\bk)(\cdot+\bk)\in W^s_p(\IId_\theta)$  
 we derive the estimates
\begin{align*}
&\norm{(f_{\bk}\varphi_\bk)  - A_{\theta, n_\bk}\big[f_\bk\varphi_\bk(\cdot +\bk)\big](\cdot-\bk)}
{L_q(\IId_{\theta,\bk}, \gamma)}  
	\\
	& \ll  e^{- \frac{|\bk  |^2} {2q_*}}
	\norm{f_{\bk}\varphi_\bk - A_{\theta, n_\bk}\big[f_\bk\varphi_\bk(\cdot +\bk)\big](\cdot-\bk)}
	{\tilde{L}_q(\IId_{\theta,\bk})} 
	\\
& = e^{- \frac{|\bk  |^2} {2q_*}}
\norm{f_{\bk}\varphi_\bk(\cdot+\bk) - A_{\theta, n_\bk}\big[f_\bk\varphi_\bk(\cdot +\bk)\big]}
{\tilde{L}_q(\IId_{\theta,\bk})} 
	\\
	& \ll  e^{- \frac{|\bk  |^2} {2q_*}}  n_{\bk}^{-a} (\log n_{\bk})^b \|(f_\bk\varphi_\bk)(\cdot+\bk)\|_{\tilde{W}^\alpha_p(\IId_\theta)}
	\\
	&
	\ll  e^{\frac{|\bk|^2}{2p_*} \big(1+\frac{ 1-e^{-t_0}}{ 1+e^{-t_0}}\big)-
		\frac{|\bk  |^2} {2q_*}}
	\Big( n e^{-\frac{\delta}{2a}|\bk|^2} \Big)^{-a} (\log n)^b \|f\|_{\Wpgamma}.
\end{align*}
By the choice of $\delta$ in \eqref{eq:delta} we obtain
\begin{align*}
\norm{(f_{\bk}\varphi_\bk)  - A_{\theta, n_\bk}\big[f_\bk\varphi_\bk(\cdot +\bk)\big](\cdot-\bk)}
{L_q(\IId_{\theta,\bk}, \gamma)}  
	&
	\ll  e^{- \frac{\delta}{2} |\bk|^2}  n^{-a}  (\log n)^b  \|f\|_{\Wpgamma},
\end{align*}
which implies
\begin{equation}\label{eq:first-term}
\begin{aligned}
	\sum_{|\bk|< \xi_n}& \norm{(f_{\bk}\varphi_\bk)  - A_{\theta, n_\bk}\big[f_\bk\varphi_\bk(\cdot +\bk)\big](\cdot-\bk)}
	{L_q(\IId_{\theta,\bk}, \gamma)}  
	\\
	&\ll \sum_{|\bk|< \xi_n}   e^{- \frac{ \delta}{2} |\bk|^2} 
	n^{-a}  (\log n)^b  \|f\|_{\Wpgamma}
	\\&
	\ll  n^{-a}  (\log n)^b  \|f\|_{\Wpgamma}.
\end{aligned}	 
\end{equation}

We define the operator $A^\gamma_{n}$ as follows
$$
A^\gamma_{n}(f)=\sum_{|\bk|<\xi_n} A_{\theta, n_\bk}\big[f_\bk\varphi_\bk(\cdot +\bk)\big](\cdot-\bk).
$$
By \eqref{eq:<n} we have
$
\rank(A^\gamma_{n}) \leq n 
$
and
\begin{equation}\label{eq:main}
	 \begin{aligned}  
	 	\|f- A_{n}^\gamma(f)\|_{\Lqgamma}
	 	&\leq 
	 	\sum_{|\bk|< \xi_n} \norm{(f_{\bk}\varphi_\bk)  - A_{\theta, n_\bk}\big[f_\bk\varphi_\bk(\cdot +\bk)\big](\cdot-\bk)}
	 	{L_q(\IId_{\theta,\bk}, \gamma)}  
	 	\\
	 	&
	 	+ \sum_{|\bk|\geq \xi_n} 	\norm{f_{\bk}\varphi_\bk}{L_q(\IId_{\theta,\bk}, \gamma)}. 
	 \end{aligned}
\end{equation}
Observe that
\begin{equation*}
\begin{aligned}
		\sum_{|\bk|\geq \xi_n} 	\norm{f_{\bk}\varphi_\bk}{L_q(\IId_{\theta,\bk}, \gamma)}
		& \leq  
			\sum_{|\bk|\geq \xi_n} 	\norm{f_{\bk}\varphi_\bk}{L_p(\IId_{\theta,\bk}, \gamma)} \bigg(\int_{\IId_{\theta,\bk}} \rd \gamma(\bx)\bigg)^{1-\frac{q}{p}}
			\\
		& \ll
		\sum_{|\bk|\geq \xi_n} 
		e^{- \frac{q|\bk|^2}{2q_*}\big(1-\frac{q}{p}\big)}
		\|f\|_{\Wpgamma}.
	\end{aligned}
\end{equation*} 
By the choice of $\delta$ in \eqref{eq:delta} we obtain
\begin{equation*}
	\begin{aligned}
		\sum_{|\bk|\geq \xi_n} 	\norm{f_{\bk}\varphi_\bk}{L_q(\IId_{\theta,\bk}, \gamma)}
		&	\ll  \sum_{|\bk|\geq \xi_n}  e^{- \delta |\bk|^2}  \|f\|_{\Wpgamma}
		\\
		&
		\leq \sum_{\ell=\lceil\xi_n^2\rceil}^\infty   \sum_{|\bk|^2=\ell}e^{-\ell \delta}\|f\|_{W^s_{p}(\RRd,\gamma)}
		\\
		&  \leq \sum_{\ell=\lceil\xi_n^2\rceil}^\infty   \sum_{|\bk|_\infty\leq \sqrt{\ell}}e^{-\ell \delta}\|f\|_{W^s_{p}(\RRd,\gamma)}
		\\
		&  \ll \sum_{\ell=\lceil\xi_n^2\rceil}^\infty   e^{-\ell \delta}\ell^{d/2} \|f\|_{W^s_{p}(\RRd,\gamma)}.
	\end{aligned}
\end{equation*} 
Fix $\varepsilon \in (0,1/2)$ we get
\begin{equation*}\label{eq-epsilon01}
	\begin{aligned}
	\sum_{|\bk|\geq \xi_n} 	\norm{f_{\bk}\varphi_\bk}{L_q(\IId_{\theta,\bk}, \gamma)}
		&
		\leq  e^{-\xi_n^2\delta(1-\varepsilon)}\sum_{\ell=\lceil\xi_n^2\rceil}^\infty   e^{-\ell\varepsilon \delta}\ell^{d/2}\|f\|_{W^s_{p}(\RRd,\gamma)}
		\\
		&
		\ll e^{-\xi_n^2\delta(1-\varepsilon)}\|f\|_{W^s_{p}(\RRd,\gamma)}.
	\end{aligned}
\end{equation*} 
Using \eqref{xi-int} we arrive at
\begin{equation}\label{eq:second-term}
	\begin{aligned}
	\sum_{|\bk|\geq \xi_n} 	\norm{f_{\bk}\varphi_\bk}{L_q(\IId_{\theta,\bk}, \gamma)}	&  \ll  e^{-2a(1-\varepsilon)\log n}\|f\|_{W^s_{p}(\RRd,\gamma)}		
		\ll n^{-a}  (\log n)^b\|f\|_{W^s_{p}(\RRd,\gamma)}.
	\end{aligned}
\end{equation}
By combining \eqref{eq:first-term}, \eqref{eq:main}, and \eqref{eq:second-term}, the proof for the space $W^s_p(\mathbb{R}^d,\gamma)$ is complete.
We comment on the proof for $W^s_{p,G}(\RRd,\gamma)$. 
Choose $q_*$ and $p_*$ such that $1\leq q<q_*<\frac{p_*}{2}<\frac{p}{2} $.   
Observe that 
\begin{equation*}
	\begin{aligned}
		&\int_{\IId_\theta}\int_{\IId_\theta}
		\frac{
			\big| D^\balpha f_\bk(\bx+\bk)  -D^\balpha f_\bk(\by+\bk)\big|^p
		}{
			|\bx-\by|^{d+\tilde{s}p}
		}
		\rd\bx	\rd\by
		\\
		&\ll \int_{\IId_{\theta,\bk}}\int_{\IId_{\theta,\bk}} 
		\big| D^\balpha f_\bk(\bx) -D^\balpha f_\bk(\by)\big|^p e^{\frac{|\bx|^2+|\by|^2}{2}  } \rd\gamma(\bx)	\rd\gamma(\by)
		\\
		&\ll    e^{\frac{p|\bk |^2}{p_*} } \|f\|_{W^s_{p,G}(\RRd,\gamma)}
	\end{aligned}
\end{equation*}
which together with \eqref{eq:bar{s}} implies 
\begin{equation*}  
	\begin{aligned} 
		\|f_{\bk}(\cdot+\bk)\varphi_{\bk}(\cdot+\bk)\|_{W^s_{p}(\IId_\theta)} 
		& \ll e^{\frac{|\bk |^2}{p_*}} \|f\|_{W^s_{p,G}(\RRd,\gamma)}.
	\end{aligned}
\end{equation*}	
Therefore
\begin{align*}
	\norm{(f_{\bk}\varphi_\bk)  - A_{\theta, n_\bk}\big[f_\bk\varphi_\bk(\cdot +\bk)\big](\cdot-\bk)}
	{L_q(\IId_{\theta,\bk}, \gamma)}  
	\leq   e^{\frac{|\bk |^2}{p_*}-
		\frac{|\bk  |^2} {2q_*}}
   n^{-a}  (\log n)^b  \|f\|_{W^s_{p,G}(\RRd,\gamma)}
\end{align*}
and
\begin{align*}
 \sum_{|\bk|< \xi_n}& \norm{(f_{\bk}\varphi_\bk)  - A_{\theta, n_\bk}\big[f_\bk\varphi_\bk(\cdot +\bk)\big](\cdot-\bk)}
	{L_q(\IId_{\theta,\bk}, \gamma)}  
	\\
	&\ll \sum_{|\bk|< \xi_n}    e^{\frac{|\bk |^2}{p_*}-
		\frac{|\bk  |^2} {2q_*}}
	n^{-a}  (\log n)^b  \|f\|_{W^s_{p,G}(\RRd,\gamma)}.
\end{align*}
The condition $q_* < \frac{p_*}{2}$ ensures that the sum is bounded by $C n^{-a} (\log n)^b \|f\|_{W^s_{p,G}(\mathbb{R}^d,\gamma)}$. The remaining steps follow similarly to those in the proof for $\Wpgamma$. The proof is complete.
\hfill
\end{proof}

Let us now recall the asymptotic behavior of Kolmogorov, linear and sampling widths for the Sobolev–Slobodeckij spaces on $\IId$ which coincide with the Besov spaces $B^s_{p,p}(\IId)$.
\begin{theorem}\label{thm:recal}
	Let $1\leq q<p<\infty$.  
\begin{itemize}
	\item[(1)] If $s>0$, then $$d_n\big(\tilde{\boldsymbol{W}^s_{p}}(\IId),\tilde{L}_q(\IId) \big)
	\asymp \lambda_n\big(\tilde{\boldsymbol{W}^s_{p}}(\IId),\tilde{L}_q(\IId) \big) 
\asymp	n^{-s/d}. $$
\item[(2)] If $s>d/p$, then
$$  \varrho_n\big(\tilde{\boldsymbol{W}^s_{p}}(\IId),\tilde{L}_q(\IId) \big)\asymp	n^{-s/d}.$$
\end{itemize}
\end{theorem}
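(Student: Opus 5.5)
This is a recalled classical result, so the plan is to reduce it to the known theory of widths of Besov embeddings on the torus rather than reprove it from scratch. The first step is to identify $\tilde{W}^s_p(\IId)$ with the periodic Besov space $B^s_{p,p}(\mathbb{T}^d)$ with equivalent norms, via \cite[Theorem 2.5.12]{Tri83B} and its periodic counterpart. This covers both non-integer $s$ (Sobolev--Slobodeckij) and integer $s$. For $s\in\NN$ and $1<p<\infty$ we have $W^s_p\hookrightarrow B^s_{p,\max(p,2)}$ and $B^s_{p,\min(p,2)}\hookrightarrow W^s_p$, so upper and lower bounds can be sandwiched between Besov classes of the same smoothness. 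The second index does not affect the order $n^{-s/d}$ when $q<p$. The case $p=1$ does not arise because $q<p$ forces $p>1$.

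For the upper bounds in (1), I would construct an explicit linear operator of rank $\asymp n$. One choice is a de la Vallée Poussin type trigonometric operator $V_N$ with $N^d\asymp n$; another is a quasi-interpolant built from periodic $B$-splines of order exceeding $s$ on a uniform grid of mesh $N^{-1}$. Since $q<p$ on a finite measure domain, it suffices to bound $\|f-V_Nf\|_{L_p}\ll N^{-s}\|f\|_{B^s_{p,p}}$, a Jackson-type estimate that follows from the Littlewood--Paley or dyadic block characterization of $B^s_{p,p}$. This gives $\lambda_n\ll n^{-s/d}$, and then $d_n\le\lambda_n$.

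For (2), with $s>d/p$ the functions are continuous, so I would replace $V_N$ by a sampling operator. Two options are the periodic de la Vallée Poussin sampling operator on the grid $\{ \mathbf{j}/(4N)\}$, or a spline quasi-interpolant using point evaluations. The Marcinkiewicz--Zygmund inequality bounds the sampled operator's error in terms of the best approximation in $L_p$, which again gives $n^{-s/d}$. The condition $s>d/p$ is exactly what makes point evaluation bounded on $B^s_{p,p}$.

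The lower bounds reduce, by $d_n\le\lambda_n\le\varrho_n$, to showing $d_n\gg n^{-s/d}$. I expect this to be the main obstacle, because the standard bump-function argument is delicate in the case $q<p$. The plan is as follows.
\begin{enumerate}
\item Split $\IId$ into $m\asymp 2n$ cubes of side $h\asymp n^{-1/d}$ and place in each a translated, dilated smooth bump $\psi_j$.
\item Consider the set $\{\sum_j\varepsilon_j\psi_j\}$ with $\varepsilon_j\in\{-1,1\}$, scaled so that $\|\sum\varepsilon_j\psi_j\|_{B^s_{p,p}}\asymp h^{s}\cdot h^{-s}=1$ after normalization. Disjoint supports give $p$-additivity of the Besov norm.
\item Map an $\ell_\infty^m$-ball isometrically, up to constants, into $\tilde{\boldsymbol{W}}^s_p$ with factor $m^{-s/d}$, and map $L_q$ onto $\ell_q^m$ with factor $h^{d/q}m^{1/q}\asymp1$ by averaging against the bumps.
\item Conclude that $d_n\gg n^{-s/d}\, d_n(B_\infty^{2n},\ell_1^{2n})/n$, and use the classical estimate $d_n(B_\infty^{2n},\ell_1^{2n})\gg n$, which holds via $d_n(B_\infty^{2n},\ell_2^{2n})\asymp\sqrt n$ together with Hölder.
\end{enumerate}

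Alternatively, the whole theorem can be cited directly from the literature on widths of Besov classes, e.g.\ Temlyakov's or Dinh Dũng's monographs and Pinkus \cite{Pin85B}.
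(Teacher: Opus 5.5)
Your outline is sound, but it takes a genuinely different route from the paper. The paper gives no argument for Theorem~\ref{thm:recal}: it simply cites \cite{EdmundsTriebel1996,Cae1998,Vy2008} for $d_n,\lambda_n$ and \cite{NT06} for $\varrho_n$. You instead reconstruct the classical proof behind such results:
\begin{itemize}
\item[(a)] identification with periodic $B^s_{p,p}$, using the sandwich $B^s_{p,\min(p,2)}\hookrightarrow W^s_p\hookrightarrow B^s_{p,\max(p,2)}$ for integer $s$; this is harmless because for $q<p$ the order does not depend on the fine index;
\item[(b)] de la Vall\'ee Poussin and sampling operators for the upper bounds;
\item[(c)] a bump-function discretization that reduces all lower bounds, via $d_n\le\lambda_n\le\varrho_n$, to $d_n(B^{2n}_\infty,\ell^{2n}_1)$.
\end{itemize}
Your route has two advantages. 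It is natively periodic: the cited works are formulated for non-periodic spaces on bounded domains, and a lower bound there does not formally pass to the smaller periodic class, although compactly supported bumps make the adaptation immediate. It also yields explicit rank-$m$ operators, including sampling operators, of exactly the type fed into Proposition~\ref{prop:-general}. The paper's citation is, of course, shorter.

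Two of your justifications need repair.

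\textbf{Sampling step.} The Marcinkiewicz--Zygmund inequality does not bound $\|f-R_Nf\|_{L_p}$ by the best approximation of $f$ in $L_p$. No such bound can hold, since point evaluations are unbounded on $L_p$. Instead, decompose $f=\sum_k\delta_kf$ into dyadic trigonometric blocks. Then use $\|R_Nt\|_{L_p}\ll(M/N)^{d/p}\|t\|_{L_p}$ for trigonometric polynomials $t$ of degree at most $M\ge N$. This gives
\[
\|f-R_Nf\|_{L_p}\ll\sum_{2^k\ge N/2}\Big(\frac{2^k}{N}\Big)^{d/p}\|\delta_kf\|_{L_p}\ll\sum_{2^k\ge N/2}\Big(\frac{2^k}{N}\Big)^{d/p}2^{-ks}\,\|f\|_{B^s_{p,\infty}}\ll N^{-s}\|f\|_{B^s_{p,\infty}}.
\]
The series converges precisely because $s>d/p$, and this is where the hypothesis really enters, not only through the boundedness of point evaluations.

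\textbf{Lower bound.} The estimate $d_n(B^{2n}_\infty,\ell^{2n}_1)\gg n$ does not follow from $d_n(B^{2n}_\infty,\ell^{2n}_2)\asymp\sqrt n$ and H\"older. Applying $\|z\|_2^2\le\|z\|_1\|z\|_\infty$ to $z=x-y$, with $y$ an $\ell_1$-best approximant from an $n$-dimensional subspace, gives nothing, because $\|x-y\|_\infty$ is uncontrolled. (H\"older interpolates Gelfand numbers in the target space, not Kolmogorov widths.) The estimate itself is correct: it is the Pietsch--Stesin identity $d_n(B^m_\infty,\ell^m_1)=m-n$. An elementary proof uses duality,
\[
\sup_{x\in B^m_\infty}\operatorname{dist}_{\ell_1}(x,L)=\sup\big\{\|y\|_1:\ y\in L^\perp,\ \|y\|_\infty\le 1\big\},
\]
together with one observation: a vertex of the polytope $L^\perp\cap B^m_\infty$, whose dimension is at least $m-n$, has at least $m-n$ coordinates of modulus one. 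With these two repairs, your sketch is a complete proof of the theorem.
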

The results for Kolmogorov and linear widths has a long history. We refer the reader to \cite{EdmundsTriebel1996,Cae1998,Vy2008} and the references therein. The result for sampling widths was proved in \cite{NT06}. Combining the above theorem with Proposition~\ref{prop:-general}, we can state one of our main results as follows.
\begin{theorem}\label{thm:main}
	Let $1\leq q<p<\infty$.  
	\begin{itemize}
		\item[(1)] If $s>0$, then $$	  d_n\big({\boldsymbol{W}}^s_{p}(\mathbb{R}^d,\gamma),L_q(\RRd,\gamma )\big)
		  \asymp \lambda_n\big({\boldsymbol{W}}^s_{p}(\mathbb{R}^d,\gamma),L_q(\RRd,\gamma) \big)
	 	\asymp n^{-s/d}.  $$
		\item[(2)] If $s>d/p$, then
		$$  	   \varrho_n\big({\boldsymbol{W}}^s_{p}(\mathbb{R}^d,\gamma),L_q(\RRd,\gamma) \big)	\asymp n^{-s/d}. $$
	\end{itemize}
\end{theorem}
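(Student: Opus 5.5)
Upper and lower bounds are handled separately.

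\emph{Upper bounds.} By Theorem~\ref{thm:recal}, for every $m$ there is a linear operator $A_m$ on $\tilde{L}_q(\IId)$ of rank at most $m$ with
\[
\|g-A_m g\|_{\tilde L_q(\IId)}\le C m^{-s/d}\|g\|_{\tilde W^s_p(\IId)} .
\]
For Kolmogorov and linear widths this follows from $\lambda_m$; since $\lambda_m$ is an infimum, we choose an operator whose error is at most twice the infimum. Proposition~\ref{prop:-general} with $a=s/d$, $b=0$ then gives a rank-$n$ operator $A_n^\gamma$ with error $\ll n^{-s/d}\|f\|_{\Wpgamma}$. Hence $\lambda_n\ll n^{-s/d}$, and $d_n\le\lambda_n$.

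For sampling widths, $s>d/p$. We take $A_m$ to be a linear sampling operator from part~(2) of Theorem~\ref{thm:recal}. In the construction of the proposition, $A^\gamma_n$ is then a sum of sampling operators $A_{\theta,n_\bk}$ applied to $f_\bk\varphi_\bk(\cdot+\bk)$. Each of these only uses the values $f(\bx_i)\varphi_\bk(\bx_i)$ at at most $n_\bk$ points, and these values are well defined because $f$ is continuous by Lemma~\ref{lem:embedding}(2). So $A^\gamma_n$ is of the form $R_n$ with at most $\sum n_\bk\le n$ points, and $\varrho_n\ll n^{-s/d}$.

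\emph{Lower bounds.} It suffices to show $d_n\gg n^{-s/d}$, since $d_n\le\lambda_n\le\varrho_n$. The plan is to transfer the lower bound from the cube. Fix a cube $Q=\IId$ near the origin and the standard fooling functions from the lower-bound proofs of Theorem~\ref{thm:recal}: functions $g$ supported in a fixed compact subcube, or more simply the unit ball of $W^s_p$ functions supported in a slightly smaller cube, for which $d_n$ in $L_q(Q)$ is still $\asymp n^{-s/d}$. This localized lower bound is classical via bump functions and finite-dimensional widths.

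Extend each such $g$ by zero to $\RRd$. On $Q$ the weight $\rho$ is bounded above and below. By Lemma~\ref{lem:estimate}(1) and (2), the kernel $K_{p\tilde s}$ is bounded by $C|\bx-\by|^{-d-p\tilde s}$ when both points are in a large cube. When one point is in $\supp g$ and the other is far away, $K_{p\tilde s}(\bx,\by)\rho(\bx)\rho(\by)\le Ce^{-(|\bx|^2+|\by|^2)/4}$, which is integrable. Together these give $\|g\|_{\Wpgamma}\le C\|g\|_{W^s_p(\IId)}$. We also have $\|h\|_{L_q(\RRd,\gamma)}\ge c\|h\|_{L_q(Q)}$. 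Restricting to $Q$, which is a norm-one operator from $\Lqgamma$ into $L_q(Q)$ up to constants, maps any $n$-dimensional approximating subspace to one of dimension at most $n$. It follows that
\[
d_n\big({\boldsymbol W}^s_p(\RRd,\gamma),\Lqgamma\big)\gg d_n\big(\text{localized ball},L_q(Q)\big)\asymp n^{-s/d}.
\]

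The main obstacle is the lower bound, specifically the kernel estimate for the zero extension. Near the diagonal Lemma~\ref{lem:estimate}(1) gives the bound. For pairs in the intermediate range, with one point in the support and the other at moderate distance, we must check that Lemma~\ref{lem:estimate}(2), applied with a fixed $M$, covers them, and that the Gaussian factors cancel the $e^{(|\bx|^2+|\by|^2)/4}$ growth. We also need the fact that the localized Besov ball still has widths of order $n^{-s/d}$, which we cite from the standard lower-bound constructions.
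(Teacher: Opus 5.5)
Your proposal is correct and follows essentially the same route as the paper: the upper bounds come from Theorem~\ref{thm:recal} fed into Proposition~\ref{prop:-general} (for sampling, using continuity of $f$ for point evaluations), and the lower bound transfers the cube widths by bounding the Gaussian Sobolev norm of a compactly supported function near the origin with Lemma~\ref{lem:estimate}(1) on a large cube and Lemma~\ref{lem:estimate}(2) off it, combined with $\|h\|_{L_q(\IId)}\ll\|h\|_{L_q(\RRd,\gamma)}$. The only difference is cosmetic. The paper applies an extension operator to periodic functions and cuts off to $[-1,1]^d$, while you extend functions supported in a smaller subcube by zero; your version avoids the extension theorem but requires the localized lower bound, which is standard.
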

\begin{proof} We present the proof for the case of Kolmogorov and linear widths; the case of sampling widths follows similarly. In this proof, for $k>0$ we denote $\UUd_k=[-k,k]^d$. 
Let $f$ be a $1$-periodic function on $\RRd$ and $f\in  \tilde{W}^\alpha_p(\IId)$. Denote by $f_{\ext}$ the extension of $f$ to $W^s_{p}(\RRd)$. For the existence of such a extension we refer the reader to \cite[Theorem 1.105]{Tri06B}. Without loss of generality we can assume that $\supp(f)\in \UUd_1$. Hence, we have
$$
\|f_{\ext}\|_{W^s_p(\RRd)} \leq C\|f\|_{\tilde{W}^s_{p}(\IId)}.
$$
We have
\begin{align*}
	\|f_{\ext}\|_{W^{\bar{s}}_p(\RRd,\gamma)}&= \Bigg( \sum_{|\balpha|_1 \leq \bar{s}} \int_{\RRd} |D^\balpha f_{\ext}(\bx)|^p  \rd \gamma(\bx)\Bigg)^{1/p}
	\\
	& \leq \Bigg( \sum_{|\balpha|_1 \leq \bar{s}} \int_{\RRd} |D^\balpha f_{\ext}(\bx)|^p  \rd  \bx\Bigg)^{1/p}
	\\
	&
	\leq 
	\|f_{\ext}\|_{W^s_p(\RRd)} \leq C\|f\|_{\tilde{W}^s_p(\IId)}.
\end{align*}
Next we estimate the term $[ f_{\ext}]_{W_{p}^s(\mathbb{R}^d,\gamma)}$. We have
	\begin{equation*} 
\begin{aligned}
[ f_{\ext}]_{W_{p}^s(\mathbb{R}^d,\gamma)}
&:=
\sum_{|\balpha|_1 = \bar{s}}
\Bigg(  	\int_{\RRd}	\int_{\RRd}
\big| D^\balpha f_{\ext}(\bx) -D^\balpha f_{\ext}(\by) \big|^pK_{p\tilde{s}}(\bx,\by)
\rd\gamma(\bx)   \rd\gamma(\by)
\Bigg)^{\frac{1}{p}}.	 
\end{aligned}
\end{equation*} 
 Note that if $\bx,\by\not \in \UUd_1$ then the integral is zero. If $\bx\in \UUd_1$ and $\by\in \RRd\backslash \UUd_2$ then by Lemma \ref{lem:estimate} (2) we get
\begin{equation*}
\begin{aligned}
&\Bigg(  	\int_{\RRd\backslash \UUd_2}	\int_{\UUd_1}
\big| D^\balpha f_{\ext}(\bx) -D^\balpha f_{\ext}(\by) \big|^pK_{p\tilde{s}}(\bx,\by)
\rd\gamma(\bx)   \rd\gamma(\by)
\Bigg)^{\frac{1}{p}}
\\
&\ll\Bigg(  	\int_{\RRd\backslash \UUd_2}	\int_{\UUd_1}
\big| D^\balpha f_{\ext}(\bx) \big|^p\exp\Big(\frac{|\bx|^2+|\by|^2}{4}\Big)
\rd\gamma(\bx)   \rd\gamma(\by)
\Bigg)^{\frac{1}{p}}
\\
&\ll\|f_{\ext}\|_{W^{\bar{s}}_p(\RRd)}.
\end{aligned}
\end{equation*}  Therefore  $[ f_{\ext}]_{W_{p}^s(\mathbb{R}^d,\gamma)}$ can be estimated as 
	\begin{equation*} 
	\begin{aligned}
	&	[ f_{\ext}]_{W_{p}^s(\mathbb{R}^d,\gamma)}
		\ll\|f_{\ext}\|_{W^{\bar{s}}_p(\RRd)}
		\\
		& +
		\sum_{|\balpha|_1 = \bar{s}}
		\Bigg(  	\int_{\UUd_2}	\int_{\UUd_2}
		\big| D^\balpha f_{\ext}(\bx) -D^\balpha f_{\ext}(\by) \big|^pK_{p\tilde{s}}(\bx,\by)
		\rd\gamma(\bx)   \rd\gamma(\by)
		\Bigg)^{\frac{1}{p}}.
	\end{aligned}
\end{equation*}
Now applying Lemma \ref{lem:estimate} (1) we get
	\begin{equation*} 
	\begin{aligned}
		[ f_{\ext}]_{W_{p}^s(\mathbb{R}^d,\gamma)}
	&	\ll\|f_{\ext}\|_{W^{\bar{s}}_p(\RRd)}
	+
		\sum_{|\balpha|_1 = \bar{s}}
		\Bigg(  	\int_{\UUd_2}	\int_{\UUd_2}
		\frac{	\big| D^\balpha f_{\ext}(\bx) -D^\balpha f_{\ext}(\by) \big|^p}{|\bx-\by|^{d+\tilde{s}p}} 
		\rd\bx  \rd\by
		\Bigg)^{\frac{1}{p}}
		\\
		&\leq  \|f_{\ext}\|_{W^{\bar{s}}_p(\RRd)}	 +[f_{\ext}]_{W^s_p(\RRd)}
		\\
		& = \|f_{\ext}\|_{W^s_{p}(\RRd)} 
		\\
		&\leq C \|f\|_{\tilde{W}^s_{p}(\RRd)}.
	\end{aligned}
\end{equation*}
We also have
$$
\|f\|_{\tilde{L}_q(\IId)}={\bigg((2\pi)^{\frac{d}{2}}\int_{\IId}|f(\bx)|^qe^{\frac{|\bx|^2}{2}} \rd \gamma( \bx)\bigg)^{1/q} }\leq (2\pi)^{\frac{d}{2q}}e^{\frac{d}{8q}}\|f\|_{\Lqgamma}.
$$
Hence, we obtain
\begin{align*}  
\lambda_n\big({\boldsymbol{W}}^s_{p}(\mathbb{R}^d,\gamma),L_q(\RRd,\gamma) \big) \geq 	  d_n\big({\boldsymbol{W}}^s_{p}(\mathbb{R}^d,\gamma),L_q(\RRd,\gamma )\big)
  \gg	d_n(\tilde{\boldsymbol{W}^s_p}(\IId),\tilde{L}_q(\IId)).
\end{align*}	
Now Theorem \ref{thm:recal} implies the lower bounds. The upper bound follows from Theorem \ref{thm:recal} and Proposition \ref{prop:-general}.	\hfill
\end{proof}

By using Proposition~\ref{prop:-general}, Theorem \ref{thm:recal} and arguing as in the proof of Theorem~\ref{thm:main}, we obtain the following result.

\begin{theorem}\label{thm:main21}
	Let $1\leq q<\frac{p}{2}<\infty$. \begin{itemize}
		\item[(1)] If $s>0$, then $$d_n\big({\boldsymbol{W}}^s_{p,G}(\RRd,\gamma),L_q(\RRd,\gamma) \big)
		\asymp \lambda_n\big({\boldsymbol{W}}^s_{p,G}(\RRd,\gamma),L_q(\RRd,\gamma) \big) 
		\asymp	n^{-s/d}. $$
		\item[(2)] If $s>d/p$, then
		$$  \varrho_n\big({\boldsymbol{W}}^s_{p,G}(\RRd,\gamma),L_q(\RRd,\gamma) \big)\asymp	n^{-s/d}.$$
	\end{itemize}
\end{theorem}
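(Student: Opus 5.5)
The plan mirrors the proof of Theorem~\ref{thm:main}, with $W^s_p(\RRd,\gamma)$ replaced by $W^s_{p,G}(\RRd,\gamma)$.

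\textbf{Upper bounds.} By Theorem~\ref{thm:recal}, for each $m$ there is a linear operator $A_m$ on $\tilde{L}_q(\IId)$ of rank at most $m$ satisfying \eqref{A_m-Error-a,b-theta} with $a=s/d$ and $b=0$. For the sampling width, when $s>d/p$, $A_m$ can be taken to be a linear sampling algorithm. Since $1\le q<\frac p2$, the second part of Proposition~\ref{prop:-general} then gives, for every $f\in W^s_{p,G}(\RRd,\gamma)$,
\[
\|f-A_n^\gamma f\|_{\Lqgamma}\ll n^{-s/d}\|f\|_{W^s_{p,G}(\RRd,\gamma)},
\]
where $A_n^\gamma$ has rank at most $n$. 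This bounds $\lambda_n$ and $d_n$. For $\varrho_n$, note that $A_n^\gamma f$ is assembled from the operators $A_{\theta,n_\bk}$ applied to $(f\varphi_\bk)(\cdot+\bk)$. Each of these uses only the values of $f$ at points of $\IId_{\theta,\bk}$, multiplied by the known weights $\varphi_\bk$. Hence $A_n^\gamma$ is a linear sampling algorithm with at most $n$ sample points. Point evaluation makes sense here: the local argument in the proof of Proposition~\ref{prop:-general} shows that each restriction $f_\bk(\cdot+\bk)$ lies in $W^s_p(\IId_\theta)$, and this space embeds into continuous functions when $s>d/p$, so $f$ is continuous.

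\textbf{Lower bounds.} By the chain $d_n\le\lambda_n\le\varrho_n$, it suffices to bound $d_n$ from below. Take a $1$-periodic $f\in\tilde W^s_p(\IId)$, reduce to the case $\supp f\subset \UUd_1$, and extend it to $f_{\ext}\in W^s_p(\RRd)$ exactly as in the proof of Theorem~\ref{thm:main}. The bound $\|f_{\ext}\|_{W^{\bar s}_p(\RRd,\gamma)}\ll\|f\|_{\tilde W^s_p(\IId)}$ is unchanged. The Gagliardo seminorm is actually easier than before: the Gaussian seminorm carries the kernel $|\bx-\by|^{-d-\tilde sp}$, and $\rho\le 1$, so
\[
[f_{\ext}]_{W^s_{p,G}(\RRd,\gamma)}\le [f_{\ext}]_{W^s_p(\RRd)}\ll \|f\|_{\tilde W^s_p(\IId)}.
\]
No use of Lemma~\ref{lem:estimate} is needed. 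Hence the unit ball of $\tilde W^s_p(\IId)$ is mapped, up to a constant, into the unit ball of $W^s_{p,G}(\RRd,\gamma)$. Combining this with $\|f\|_{\tilde L_q(\IId)}\ll\|f\|_{\Lqgamma}$ yields
\[
d_n\big(\boldsymbol W^s_{p,G}(\RRd,\gamma),L_q(\RRd,\gamma)\big)\gg d_n\big(\tilde{\boldsymbol W}^s_p(\IId),\tilde L_q(\IId)\big)\asymp n^{-s/d}
\]
by Theorem~\ref{thm:recal}.

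\textbf{Main obstacle.} The only delicate point is the sampling upper bound. One must check that the construction of Proposition~\ref{prop:-general} preserves the sampling structure: the localized operators are applied to $f\varphi_\bk$ rather than to $f$, but $\varphi_\bk$ is known, so the algorithm remains linear in point values of $f$. One must also check that the ranks $n_\bk$ add up to at most $n$ sample points, which is exactly \eqref{eq:<n}. Everything else is a direct transcription of earlier arguments.
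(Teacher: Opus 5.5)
Your proposal is correct and takes the paper's route: the upper bounds come from the $W^s_{p,G}$ part of Proposition~\ref{prop:-general} (available since $q<\frac p2$) combined with Theorem~\ref{thm:recal}, and the lower bound comes from the extension argument of Theorem~\ref{thm:main}. Your observation that $\rho\le 1$ makes the Gaussian Gagliardo seminorm directly bounded by $[f_{\ext}]_{W^s_p(\RRd)}$, so that Lemma~\ref{lem:estimate} is not needed, is a valid simplification of that step; the lower bound also follows at once from Lemma~\ref{lem:embedding}(1) and Theorem~\ref{thm:main}.
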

We proceed with the result in the case $p=q=2$.
\begin{theorem} 	\label{theorem:widths:p=q=2}
	Let $s >0$ and $s_n\in \{\lambda_n,d_n\}$. Then 
	\begin{equation*}\label{widths:p=q=2}
		s_n\big({\boldsymbol{W}}^s_{2}(\mathbb{R}^d,\gamma),L_2(\RRd,\gamma )\big)
		\asymp s_n\big({\boldsymbol{H}}^s(\mathbb{R}^d,\gamma),L_2(\RRd,\gamma) \big)
		\asymp 
		n^{-\frac{s}{2d}}.
	\end{equation*}
	Moreover
\begin{equation*} \label{eq-asymptotic}
	\lim\limits_{n\to \infty} \frac{s_n\big({\boldsymbol{H}}^s(\mathbb{R}^d,\gamma),L_2(\RRd,\gamma) \big)}{n^{-\frac{s}{2d}}}= \bigg( \frac{1}{\sqrt[d]{d!}}\bigg)^{s/2}.
\end{equation*}
\end{theorem}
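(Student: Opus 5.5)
The plan is to reduce everything to a diagonal operator in $\ell_2$ via the Hermite basis, and then count lattice points.

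\emph{Reduction to a diagonal operator.} By Lemma~\ref{lem:embedding2}, $W^s_2(\RRd,\gamma)=H^s(\RRd,\gamma)$ with equivalent norms. The unit balls are therefore comparable up to constant dilations, and since $s_n$ is homogeneous and monotone under inclusion, the first equivalence follows. Because $L_2(\RRd,\gamma)$ is a Hilbert space, $\lambda_n=d_n$, so it suffices to treat $d_n$ for $\boldsymbol{H}^s(\RRd,\gamma)$. Using the orthonormal basis $(H_\bk)_{\bk\in\NN_0^d}$ and Parseval's identity, the embedding $H^s(\RRd,\gamma)\to L_2(\RRd,\gamma)$ is unitarily equivalent to the diagonal operator on $\ell_2(\NN_0^d)$ with entries $\sigma_\bk=(1+|\bk|_1)^{-s/2}$. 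For a compact diagonal operator between Hilbert spaces, the Kolmogorov (and approximation) numbers are the nonincreasing rearrangement $(\sigma^*_j)$ of the $\sigma_\bk$. This is the standard Hilbert-space fact: the optimal subspace is spanned by the eigenvectors with the largest singular values, and the lower bound follows from the Bernstein/Tikhomirov argument on the span of the first $n+1$ of them. With the paper's indexing, $d_n(F,Y)=d_{n+1}(T)$, so $s_n(\boldsymbol{H}^s,L_2)=\sigma^*_{n+1}$.

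\emph{Counting.} Let $N(m):=|\{\bk\in\NN_0^d:|\bk|_1\le m\}|=\binom{m+d}{d}$. The rearranged sequence takes the value $(1+m)^{-s/2}$ exactly for indices $j$ with $N(m-1)<j\le N(m)$. Given $n$, choose $m=m(n)$ with $N(m-1)\le n<N(m)$. Then
\[
s_n=\sigma^*_{n+1}=(1+m)^{-s/2}.
\]
Since $N(m)=\frac{m^d}{d!}(1+O(1/m))$, we have $n\sim m^d/d!$, that is, $m\sim (d!\,n)^{1/d}$. Hence
\[
s_n\sim \big((d!)^{1/d}n^{1/d}\big)^{-s/2}=\Big(\tfrac{1}{\sqrt[d]{d!}}\Big)^{s/2}n^{-\frac{s}{2d}}.
\]
This gives both the order $n^{-s/(2d)}$ and the stated limit.

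\emph{Expected obstacle.} The main point needing care is the limit itself. The limit is asserted for $\boldsymbol{H}^s$, so the norm equivalence in the first step is not needed there; exact identification of $s_n$ with $\sigma^*_{n+1}$ is what matters. One must then check that the ratio $m(n)/n^{1/d}$ converges, i.e.\ that $N(m-1)/N(m)\to1$, so that the sandwich $N(m-1)\le n<N(m)$ forces $n\,d!/m^d\to1$. For the $W^s_2$ statement only $\asymp$ is claimed, so the constants hidden in Lemma~\ref{lem:embedding2} are harmless.
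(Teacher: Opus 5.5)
Your proposal is correct and follows essentially the same route as the paper: you pass through the Hermite coefficient map to the diagonal operator on $\ell_2(\NN_0^d)$ with entries $(1+|\bk|_1)^{-s/2}$ (the paper sets this up with two commutative diagrams rather than by invoking unitary equivalence directly), identify the widths with the nonincreasing rearrangement of these entries, and count $|\{\bk:|\bk|_1\le m\}|=\binom{m+d}{d}$. Your exact identification $s_n=\sigma^*_{n+1}=(1+m)^{-s/2}$ for $N(m-1)\le n<N(m)$ handles the index shift more cleanly than the paper's sandwich inequality, and it yields both the order and the limit as you describe.
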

\begin{proof}
	Since   $L_2(\RRd,\gamma)$ is a Hilbert space
	$$	d_n\big({\boldsymbol{W}}^s_{2}(\mathbb{R}^d,\gamma),L_2(\RRd,\gamma )\big)
	= \lambda_n\big({\boldsymbol{W}}^s_{2}(\mathbb{R}^d,\gamma),L_2(\RRd,\gamma) \big)$$
as well as
	$$	d_n\big({\boldsymbol{H}}^s(\mathbb{R}^d,\gamma),L_2(\RRd,\gamma )\big)
= \lambda_n\big({\boldsymbol{H}}^s(\mathbb{R}^d,\gamma),L_2(\RRd,\gamma) \big).$$
By Lemma \ref{lem:embedding2} we get
$$s_n\big({\boldsymbol{W}}^s_{2}(\mathbb{R}^d,\gamma),L_2(\RRd,\gamma )\big)
\asymp s_n\big({\boldsymbol{H}}^s(\mathbb{R}^d,\gamma),L_2(\RRd,\gamma) \big).$$
	Consider the diagram
	\begin{equation*}
		\begin{CD}
			H^s(\RRd,\gamma)  @ > Id >> L_2(\RRd,\gamma) \\
			@VV A V @AA B A\\
			\ell_2(\NNd_0) @ > D  >> \ell_2(\NNd_0) \,, 
		\end{CD}
	\end{equation*}
	where the linear operators $A$, 	$D$, and $B $ 
	are defined as 
	\begin{equation*}	\label{ws-12}
		\begin{aligned}
			Af & : =  \big( (1+|\bk|_1)^{s/2}\widehat{f}(\bk)\big)_{\bk\in \NNd_0}\, , \qquad f\in H^s(\RRd,\gamma)
			\\
			D\xi & :=   \big(\xi_\bk/ (1+|\bk|_1)^{s/2}\big)_{\bk\in \NNd_0}\,  , \qquad \xi=(\xi_\bk)_{\bk\in \NNd_0}
			\\
			(B\xi)(\bx)& :=  \sum_{\bk\in \NNd_0} \xi_\bk\,  H_\bk(\bx)\, , \qquad \bx \in  \RRd\, .
		\end{aligned}
	\end{equation*}
	It is obvious that $\|A\|=\|B\|=1$. It follows
	\[
d_{n-1}\big({\boldsymbol{H}}^s(\mathbb{R}^d,\gamma),L_2(\RRd,\gamma )\big)=d_n(id) \le d_n \big(D\big) \, .
	\]
	
	It is easily  seen that the operators $A$ and $B$ are invertible and  that $\|A^{-1}\|=\|B^{-1}\|=1$. 
	Employing the same type of arguments with respect to the diagram
	\begin{equation*}
		\begin{CD}
			H^s(\RRd,\gamma)  @ > Id >> L_2(\RRd,\gamma) \\
			@AA {A^{-1}} A @VV B^{-1} V\\
			\ell_2(\NNd_0) @ > D >> \ell_2(\NNd_0) \, 
		\end{CD}
	\end{equation*} we also get
	\[
d_{n-1}\big({\boldsymbol{H}}^s(\mathbb{R}^d,\gamma),L_2(\RRd,\gamma )\big) =d_n(Id)\geq d_n \big(D\big) \, .
	\]
	Consequently, we obtain
	\[
d_{n-1}\big({\boldsymbol{H}}^s(\mathbb{R}^d,\gamma),L_2(\RRd,\gamma )\big)=	d_n\big(Id\big)=
d_n\big(D\big) .
	\]
We know that $	d_n\big(D\big)=a_n(D)=\sigma_n$ where $\sigma_n$ is the non-increasing rearrangement of $(1+|\bk|_1)^{-s/2}$. For $r\in \NN$, we know that the cardinality of the set 
$
\{\bk\in \NNd_0: 1+|\bk|_1\leq r \}=
$ is $c(r,d)=\binom{r-1+d}{d}$. If $c(r-1,d) < n\leq c(r,d) $. Then we have
$$
r^{-s/2} \leq d_{c(r,d)}(D) \leq d_n(D) \leq d_{c(r-1,d)} \leq (r-1)^{-s/2}.
$$
Hence 
$$
\frac{c(r-1,d)}{r^d}\leq \frac{d_n(D)^{2d/s}}{n^{-1}} \leq \frac{c(r,d)}{(r-1)^d}.
$$
Using $c(r,d)=\binom{r-1+d}{d}$  we obtain the desired results.	\hfill
\end{proof}
\section{Gaussian Sobolev spaces with mixed smoothness}\label{sec:sec4}

This section is devoted to establishing the asymptotic behavior of the Kolmogorov and linear widths for functions in fractional Gaussian Sobolev spaces of mixed smoothness. The analysis builds upon the methodology developed in Section~3, suitably adapted to the present setting. In particular, we extend the underlying techniques to accommodate mixed smoothness and derive the corresponding asymptotic estimates.

	For $s \in \NN$, we define the Gaussian Sobolev  space with mixed smoothness $W^s_{p,\mix}(\RRd,\gamma)$ as the normed space of all functions $f\in L_p(\RRd,\gamma)$ such that the  generalized  partial derivatives $D^\balpha f$ of order $\balpha$  belong to $L_p(\RRd,\gamma)$ for all $\balpha\in \NN_0^d$ satisfying $|\balpha|_\infty\leq s$. The norm of a  function $f$ in this space 
is defined by
\begin{align*} \label{W-Omega}
	\|f\|_{W^s_{p,\mix}(\RRd,\gamma)}: = \Bigg(\sum_{|\balpha|_\infty \leq s} \|D^\balpha f\|_{L_p(\RRd,\gamma)}^p\Bigg)^{1/p}.
\end{align*}

For function $f$ defined on a set $\UUd\subset \RRd$ and $\bx,\by\in \UUd$ we denote
$$
\Delta_{y_j,j} f(\bx):=f(x_1,\ldots,x_{j-1},x_j,x_{j+1},\ldots,x_d)-f(x_1,\ldots,x_{j-1},y_j,x_{j+1},\ldots,x_d) .
$$ 
For $e\subset \{1,\ldots,d\}$, $e\not=\emptyset$,  we denote   $\by_e=(y_j)_{j\in e}\in \RR^{|e|}$ and
$$
\Delta^e_{\by_e}f(\bx):=\prod_{j\in e} \Delta_{y_j,j} f(\bx), 
$$
where $\Delta_{y_j,j}$ is the univariate operator applied to the $j$-th coordinate of $f$
with the other variables kept fixed. 
\begin{definition}\label{def:Sobolev}
	Let $1\leq p<\infty$, $s>0$ and $s\not \in \NN$. Then the fractional Sobolev spaces with mixed smoothness $W_{p,\mix}^{s}(\UUd)$ is defined by
	$$
	W^s_{p,\mix}(\UUd):=\big\{f\in W_{p,\mix}^{\bar{s}}(\UUd): [f]_{W_{p,\mix}^{s}(\UUd)} <\infty \big\}
	,$$
	where
	$$
	[f]_{W^s_{p,\mix}(\UUd)}
	:=\sum_{|\balpha|_\infty = \bar{s}}\sum_{e\subset [d],e\not=\emptyset}
	\Bigg(
	\int_{\UU^{|e|}}\int_{\UUd}
	\frac{
		\big| \Delta^e_{\by_e}D^\balpha f(\bx)  \big|^p
	}{
		\prod_{j\in e}	|x_j - y_j|^{1 + \tilde{s}p}
	}
	\, \rd\bx \, \rd\by_e
	\Bigg)^{\frac{1}{p}}.
	$$
	The norm of $f\in W_{p,\mix}^{s}(\UUd)$ is given by
	$$
	\|f \|_{W_{p,\mix}^{s}(\UUd)}:
	=
	\|f \|_{ W_{p,\mix}^{\bar{s}}(\UUd)}
	+ [f]_{W_{p,\mix}^{s}(\UUd)}.
	$$
\end{definition}
Note that the space $W_{p,\mix}^{s}(\UUd)$ coincides with the so-called Besov space of dominating mixed smoothness, often denoted by $S^s_{p,p}B(\UUd)$, see \cite[Theorems 2.2.6.2 and 2.3.4.1]{ST87B}.

\begin{definition}
	Let $1\leq p<\infty$,  $s >0$ and $s\not \in \NN$. We define the fractional Gaussian Sobolev space  with mixed smoothness $W_{p,\mix}^s(\mathbb{R}^d,\gamma)$ as
	\[
	W_{p,\mix}^s(\mathbb{R}^d,\gamma)
	:=
	\left\{
	f \in W^{\bar{s}}_{p,\mix}(\mathbb{R}^d,\gamma)
	\; : \;
	[ f]_{W_{p,\mix}^s(\RRd,\gamma)} < \infty
	\right\},
	\]
	where
	\begin{equation*}\label{eq:quasi-norm2}
		[ f]_{W_{p,\mix}^s(\mathbb{R}^d,\gamma)}
		:=
		\sum_{|\balpha|_\infty = \bar{s}}\sum_{e\subset [d],e\not=\emptyset}
		\Bigg(
		\int_{\RRd}	\int_{\RR^{|e|}}
		\big| \Delta^e_{\by_e}D^\balpha f(\bx)  \big|^2 \bigg(\prod_{i\in e}K_{p\tilde{s}}(x_i,y_i)\bigg)
		\rd\gamma(\bx) \, \rd\gamma(\by_e)
		\Bigg)^{\frac{1}{p}}.	 
	\end{equation*} 
	The norm of $f\in W_{p,\mix}^s(\mathbb{R}^d,\gamma)$ is defined by
	$$
	\|f\|_{W_{p,\mix}^s(\mathbb{R}^d,\gamma)}:=\|f\|_{W^{\bar{s}}_{p,\mix}(\mathbb{R}^d,\gamma)}+	[ f]_{W_{p,\mix}^s(\mathbb{R}^d,\gamma)}.
	$$
\end{definition}
We denote by $\tilde{W}^s_{p,\mix}(\mathbb{U}^d)$ the subspace of $W^s_{p,\mix}(\mathbb{U}^d)$ consisting of functions that are $\UU$-periodic in each variable.
The proof of the following proposition proceeds analogously to that of Proposition~\ref{prop:-general}.
\begin{prop} \label{prop:main2}
Let $s>0$, $1 \le q < p < \infty$ and let $a > 0$, $b \ge 0$.  
Assume that for each $m \in \mathbb{N}$, there exists a linear operator $A_m$ on $\tilde{L}_q(\mathbb{I}^d)$ of rank at most $m$ such that
	\begin{equation*} 
		\| g - A_m(g) \|_{\tilde{L}_q(\IId)}\leq C m^{-a} (\log m)^b \|g\|_{\tilde{W}^s_{p,\mix}(\IId)}, 
		\ \  g\in \tilde{W}^s_{p,\mix}(\IId).
	\end{equation*}
Then, for any $n \in \mathbb{N}$, one can construct, based on this operator, a linear operator $A_n^\gamma$ on $L_q(\mathbb{R}^d,\gamma)$ of rank at most $n$ such that
	\begin{equation*} 
		\| f - A_{n}^\gamma(f) \|_{\Lqgamma}\leq C n^{-a} (\log n)^b \|f\|_{W^s_{p,\mix}(\RRd,\gamma)}, 
		\ \  f\in W^s_{p,\mix}(\RRd,\gamma).
	\end{equation*}
\end{prop}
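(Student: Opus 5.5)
We follow the scheme of the proof of Proposition~\ref{prop:-general} verbatim. Fix $\theta>1$, the cubes $\IId_{\theta,\bk}=\bk+\IId_\theta$, and the same smooth partition of unity $(\varphi_\bk)_{\bk\in\ZZd}$. Here $\varphi_\bk$ is taken to be a tensor product $\varphi_\bk(\bx)=\prod_{j=1}^d\phi(x_j-k_j)$ of a univariate partition of unity $\phi(\cdot-k)$, $k\in\mathbb Z$, so that multiplication by $\varphi_\bk$ is bounded on $W^s_{p,\mix}(\IId_\theta)$ (pointwise multipliers for $S^s_{p,p}B$ with $C^\kappa$-bounded tensor symbols, $\kappa>s$ large). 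Write $f=\sum_\bk f_\bk\varphi_\bk$. Choose $1\le q<q_*<p_*<p$, $t_0>0$ and $\delta>0$ exactly as in \eqref{eq:delta}. Define $\xi_n$ and $n_\bk$ as in \eqref{xi-int}, so that $\sum_{|\bk|<\xi_n}n_\bk\le n$ by \eqref{eq:<n}, and set
$$A_n^\gamma f=\sum_{|\bk|<\xi_n}A_{\theta,n_\bk}\big[(f_\bk\varphi_\bk)(\cdot+\bk)\big](\cdot-\bk).$$

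The key new step is the local estimate
$$\|f_\bk(\cdot+\bk)\|_{W^s_{p,\mix}(\IId_\theta)}\le C e^{\frac{|\bk|^2}{2p_*}(1+\frac{1-e^{-t_0}}{1+e^{-t_0}})}\|f\|_{W^s_{p,\mix}(\RRd,\gamma)}.$$
The integer part $\|\cdot\|_{W^{\bar s}_{p,\mix}}$ is handled as in \eqref{eq:bar{s}}, using $e^{|\bx|^2/2}\le Ce^{p|\bk|^2/(2p_*)}$ on $\IId_{\theta,\bk}$. For the seminorm, fix $\balpha$ with $|\balpha|_\infty=\bar s$ and $e\ne\emptyset$. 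Apply Lemma~\ref{lem:estimate}(3) in dimension one to each factor $K_{p\tilde s}(x_j,y_j)$, $j\in e$, with $x_j,y_j\in k_j+[-\theta/2,\theta/2]$; a cube of side $\theta$ in place of side $1$ only changes constants. This gives
$$|x_j-y_j|^{-1-\tilde sp}\le C\, e^{-\frac{x_j^2+y_j^2}{4}(1-\frac{1-e^{-t_0}}{1+e^{-t_0}})}K_{p\tilde s}(x_j,y_j).$$
Converting $\rd\bx\,\rd\by_e$ into $\rd\gamma(\bx)\,\rd\gamma(\by_e)$ produces the factor $\exp\big(\tfrac{|\bx|^2}{2}+\sum_{j\in e}\tfrac{y_j^2}{2}\big)$. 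Combining the two exponentials gives, for $j\in e$, the weight $e^{\frac{x_j^2+y_j^2}{4}(1+\frac{1-e^{-t_0}}{1+e^{-t_0}})}$, and for $j\notin e$ the weight $e^{x_j^2/2}$. Both are bounded on the cube by $Ce^{\frac{p|\bk|^2}{2p_*}(1+\frac{1-e^{-t_0}}{1+e^{-t_0}})}$, which absorbs the $\theta$-shift into the passage from $p$ to $p_*$.

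One caveat: the displayed definition of $[f]_{W^s_{p,\mix}(\RRd,\gamma)}$ has $|\Delta^e_{\by_e}D^\balpha f|^2$ where $|\cdot|^p$ is clearly intended. We use the $p$-th power, matching Definition~\ref{def:Sobolev}.

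After the multiplier step we have
$$\|(f_\bk\varphi_\bk)(\cdot+\bk)\|_{\tilde W^s_{p,\mix}(\IId_\theta)}\ll e^{\frac{|\bk|^2}{2p_*}(1+\frac{1-e^{-t_0}}{1+e^{-t_0}})}\|f\|.$$
Periodicity holds because the support lies in the interior of the cube. The rest follows as before. The hypothesis applied to the rescaled operator $A_{\theta,m}$, together with $\|\cdot\|_{L_q(\IId_{\theta,\bk},\gamma)}\ll e^{-|\bk|^2/(2q_*)}\|\cdot\|_{L_q(\IId_{\theta,\bk})}$ and $n_\bk^{-a}(\log n_\bk)^b\ll n^{-a}e^{\frac{\delta}{2}|\bk|^2}(\log n)^b$, gives per-cube errors $\ll e^{-\delta|\bk|^2/2}n^{-a}(\log n)^b\|f\|$. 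These are summable over $\bk$, which yields the analogue of \eqref{eq:first-term}. The tail $\sum_{|\bk|\ge\xi_n}\|f_\bk\varphi_\bk\|_{L_q(\gamma)}$ is bounded by Hölder exactly as in \eqref{eq:second-term}, since it uses only $\|f\|_{L_p(\gamma)}\le\|f\|_{W^s_{p,\mix}(\RRd,\gamma)}$.

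We expect the main obstacle to be the mixed local estimate: making sure that the one-dimensional lower bound for the kernel, applied coordinatewise only on $e$, produces a Gaussian weight growth of rate strictly less than $1/(2q_*)$ in every coordinate. This is what the choice of $t_0$ secures, and it is also the point where the tensor-product multiplier bound must be justified via \cite{ST87B}.
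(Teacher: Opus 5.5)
Your proposal is correct and is the argument the paper intends. The paper gives no separate proof and says only that it proceeds analogously to Proposition~\ref{prop:-general}; you carry out that analogue. The mixed-specific ingredients you supply are the tensor-product partition of unity and the coordinatewise, one-dimensional application of Lemma~\ref{lem:estimate}(3) to the factors $K_{p\tilde s}(x_j,y_j)$, $j\in e$. These reproduce the isotropic weight $e^{\frac{|\bk|^2}{2p_*}\left(1+\frac{1-e^{-t_0}}{1+e^{-t_0}}\right)}$, and your reading of $|\Delta^e_{\by_e}D^\balpha f|^2$ as a typo for the $p$-th power is also right.
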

We recall the following result.
\begin{theorem}\label{thm:recal2}
	Let $s>0$ and $s\not \in \NN$. Assume that $2\leq q<p<\infty$ or $1\leq q<2\leq p<\infty$. Then we have
	\begin{equation*}  d_n\big({\boldsymbol{W}}^s_{p,\mix}(\IId),L_q(\IId) \big)
		\asymp \lambda_n\big({\boldsymbol{W}}^s_{p,\mix}(\IId),L_q(\IId) \big) 
		\asymp	n^{-s}(\log n)^{(d-1)(s+\frac{1}{2}-\frac{1}{p})}. 
	\end{equation*}
\end{theorem}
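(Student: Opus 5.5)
This is a recalled result about the unweighted cube $\IId$, so the plan is to assemble it from the literature rather than prove it from scratch.

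The first step is to identify the space. By the remark after Definition~\ref{def:Sobolev}, for $s\notin\NN$ the space $W^s_{p,\mix}(\IId)$ coincides, with equivalent norms, with the Besov space of dominating mixed smoothness $S^s_{p,p}B(\IId)$. Equivalent norms change the widths only by constants. It therefore suffices to show
\begin{equation*}
d_n\big(S^s_{p,p}B(\IId),L_q(\IId)\big)\asymp \lambda_n\big(S^s_{p,p}B(\IId),L_q(\IId)\big)\asymp n^{-s}(\log n)^{(d-1)(s+\frac12-\frac1p)} .
\end{equation*}

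For the upper bound, I would use the wavelet (or hyperbolic-cross B-spline) characterization of $S^s_{p,p}B$. It turns the embedding $S^s_{p,p}B\to L_q$ into a weighted diagonal operator acting between mixed sequence spaces, blocked over dyadic levels $\bk\in\NN_0^d$.
\begin{itemize}
\item Since $q<p$, Hölder's inequality on the finite measure space, together with $2\le p$, gives a bound through $L_2$ or $L_{\min(p,\cdot)}$ on each block.
\item On each block I apply the finite-dimensional widths $d_m(\ell_p^N,\ell_q^N)$ and $\lambda_m$ (Gluskin, Kashin) to the blocks with $|\bk|_1$ near $\log n$, using a step hyperbolic cross.
\item Summing over the $\asymp(\log n)^{d-1}$ blocks per level should yield the exponent $(d-1)(s+\tfrac12-\tfrac1p)$. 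The $\tfrac12-\tfrac1p$ term comes from the $\ell_p$-sum over blocks being compared against the $\ell_2$ Littlewood–Paley structure of $L_q$ for $q\le2$, or against $L_q$ for $q\ge 2$.
\end{itemize}

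For the lower bound, since $d_n\le\lambda_n$ it suffices to bound $d_n$ from below. I would take a fooling subspace spanned by wavelets on a single hyperbolic layer $|\bk|_1=\mu$ with $2^\mu\mu^{d-1}\asymp n$. The unit ball of $S^s_{p,p}B$ contains the scaled $\ell_p$-ball of that layer's coefficients, so the problem reduces to a lower bound for $d_n$ of a finite-dimensional embedding, which the known estimates supply.

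The main obstacle is matching the logarithmic exponent exactly in both directions. In particular, the case split $2\le q<p$ versus $1\le q<2\le p$ affects which Littlewood–Paley comparison is used. In practice I would not redo this, but cite the known results for Besov spaces of dominating mixed smoothness $S^s_{p,p}B$ with $p=\theta$ (Dinh D\~ung, Temlyakov, Ullrich, \emph{Hyperbolic Cross Approximation}, Sections on Kolmogorov and linear widths, and Galeev, Romanyuk). The periodic results there transfer to $\IId$ via the identification above.
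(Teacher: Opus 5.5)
Your proposal matches the paper, whose proof of this recalled statement consists only of citing the literature for the Besov spaces $S^s_{p,p}B$ of dominating mixed smoothness: Galeev for the lower bound on $d_n$, and the hyperbolic-cross operator (Romanyuk) for the upper bound on $\lambda_n$. Two small corrections to your heuristics: the identification $W^s_{p,\mix}(\IId)=S^s_{p,p}B(\IId)$ does not by itself transfer periodic results to the cube (that needs an extension-plus-periodization step for the upper bound and the inclusion $\tilde{W}^s_{p,\mix}(\IId)\subset W^s_{p,\mix}(\IId)$ for the lower bound), and for $q<p$ no Gluskin--Kashin input is needed, since the step hyperbolic-cross projection measured in $L_p$ already gives the rate via Littlewood--Paley, Minkowski's inequality ($p\ge 2$) and H\"older's inequality from $\ell_p$ to $\ell_2$ over the $\asymp(\log n)^{d-1}$ blocks of each layer.
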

The lower bound for the Kolmogorov widths was proved by Galeev \cite{Galeev01}. The upper bound for linear width was obtained by using the hyperbolic cross operator, see, e.g., \cite{Romanyuk1991}. To the knowledge of the author, the asymptotic behavior of Kolmogorov and linear widths of the set ${\boldsymbol{W}}^s_{p,\mix}(\IId)$ in $L_q(\RRd,\gamma)$ in the case $1\leq q<p<2$ is  open. We refer the reader to the book \cite[Chapter 4]{DTU18B} for further comments. 
Using the same argument as in the proof of Theorem \ref{thm:main} we get the following.
\begin{theorem}\label{thm:main3}
	Let $s>0$ and $s\not \in \NN$. Assume that $2\leq q<p<\infty$ or $1\leq q<2\leq p<\infty$. Then we have
	\begin{equation*}
		\begin{aligned}
			d_n\big({\boldsymbol{W}}^s_{p,\mix}(\mathbb{R}^d,\gamma),L_q(\RRd,\gamma )\big)
			& \asymp \lambda_n\big({\boldsymbol{W}}^s_{p,\mix}(\mathbb{R}^d,\gamma),L_q(\RRd,\gamma) \big)
				\asymp n^{-s}(\log n)^{(d-1)(s+\frac{1}{2}-\frac{1}{p})}. 
		\end{aligned} 
	\end{equation*}
\end{theorem}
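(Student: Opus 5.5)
The proof will follow the scheme of Theorem \ref{thm:main}: the upper bound comes from Proposition \ref{prop:main2} combined with Theorem \ref{thm:recal2}, and the lower bound from an extension argument that embeds the periodic class on $\IId$ into the Gaussian class.

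For the upper bound, the hyperbolic cross operator $A_m$ from the proof of Theorem \ref{thm:recal2} has rank at most $m$ and satisfies
\[
\|g-A_m(g)\|_{\tilde L_q(\IId)}\leq C m^{-s}(\log m)^{(d-1)(s+\frac12-\frac1p)}\|g\|_{\tilde W^s_{p,\mix}(\IId)}
\]
in the stated parameter ranges. This is the hypothesis of Proposition \ref{prop:main2} with $a=s$ and $b=(d-1)(s+\frac12-\frac1p)$. The resulting operator $A^\gamma_n$ gives
\[
\lambda_n\big({\boldsymbol{W}}^s_{p,\mix}(\RRd,\gamma),L_q(\RRd,\gamma)\big)\ll n^{-s}(\log n)^{b},
\]
and $d_n\le\lambda_n$ yields the same bound for the Kolmogorov width. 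Proposition \ref{prop:main2} is stated without proof, so I would verify its localization step in the mixed setting. That step needs three things. First, on each cube, the bound $K_{p\tilde s}(x_i,y_i)\ge C e^{(x_i^2+y_i^2)c/4}|x_i-y_i|^{-1-\tilde sp}$ from Lemma \ref{lem:estimate}(3), applied coordinatewise for $d=1$. Second, the factorization $e^{|\bx|^2/2}=\prod_i e^{x_i^2/2}$, which lets the exponential weights separate across the coordinates in $e$. Third, the pointwise multiplier property of $S^s_{p,p}B$ by smooth cutoffs $\varphi_\bk$, which can be taken as tensor products. The rest of the bookkeeping, namely the choice of $n_\bk$, $\xi_n$ and the tail sum, is identical to the proof of Proposition \ref{prop:-general}.

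For the lower bound, let $g\in\tilde W^s_{p,\mix}(\IId)$ with $\|g\|\le1$. I would take a compactly supported extension $g_{\ext}$, supported in $\UUd_1$, with $\|g_{\ext}\|_{W^s_{p,\mix}(\RRd)}\ll\|g\|$; this is available for Besov spaces of dominating mixed smoothness by multiplying the periodic function by a tensor-product cutoff. I would then show $\|g_{\ext}\|_{W^s_{p,\mix}(\RRd,\gamma)}\ll 1$:
\begin{itemize}
\item[(a)] The integer-order part is bounded because $\rho\le1$.
\item[(b)] For each seminorm term indexed by $e$, split each coordinate $y_i$, $i\in e$, into the regions $|y_i|\le2$ and $|y_i|>2$. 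On $|y_i|\le 2$, use Lemma \ref{lem:estimate}(1) in one dimension to compare with the Lebesgue kernel $|x_i-y_i|^{-1-\tilde sp}$.
\item[(c)] On $|y_i|>2$, the point $\bx$ must lie in the support for the difference in that coordinate to be nonzero, unless both points lie outside. Then Lemma \ref{lem:estimate}(2) gives $K\ll e^{(x_i^2+y_i^2)/4}$. The factor $e^{y_i^2/4}$ is integrable against $\rd\gamma(y_i)$, and $e^{x_i^2/4}$ is bounded on the support.
\end{itemize}
Mixed difference terms, where some coordinates are near and some far, factor coordinatewise, so they reduce to lower-order mixed differences in the near coordinates times bounded constants. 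These are controlled by $\|g_{\ext}\|_{W^s_{p,\mix}(\RRd)}$.

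Finally, $\|g\|_{\tilde L_q(\IId)}\ll\|g\|_{L_q(\RRd,\gamma)}$, since $\rho$ is bounded below on $\IId$. Hence
\[
d_n\big({\boldsymbol{W}}^s_{p,\mix}(\RRd,\gamma),L_q(\RRd,\gamma)\big)\gg d_n\big(\tilde{\boldsymbol{W}}^s_{p,\mix}(\IId),\tilde L_q(\IId)\big).
\]
Galeev's lower bound in Theorem \ref{thm:recal2} holds for the periodic class, which finishes the proof.

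I expect the main obstacle to be the verification in (b) and (c): bounding the Gaussian mixed seminorm of the extension, and more generally handling mixed differences when the points straddle the support. The tool I would try first is the tensor-product structure of both the kernel and the Gaussian weight, reducing each case to the one-dimensional estimates of Lemma \ref{lem:estimate}.
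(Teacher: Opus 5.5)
Your proposal is correct and follows the paper's route. The upper bound comes from Proposition \ref{prop:main2}, fed with the hyperbolic cross operator behind Theorem \ref{thm:recal2}. The lower bound comes from the compactly supported extension of periodic functions, exactly as in the proof of Theorem \ref{thm:main}, and your coordinatewise near/far splitting via the one-dimensional cases of Lemma \ref{lem:estimate} is a faithful filling-in of the mixed-smoothness details that the paper leaves implicit.
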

\noindent
{\bf Acknowledgments:} A part of this work was done when  the author was working at the Vietnam Institute for Advanced Study in Mathematics (VIASM). He would like to thank  the VIASM for providing a fruitful research environment and working condition. 
%\bibliographystyle{abbrv}
%\bibliography{AllBib}
%\end{document}

\end{document}